\newtheorem{theorem}{Theorem}[section]
\newtheorem{lemma}[theorem]{Lemma}
\newtheorem*{dthm}{Dirichlet's theorem}
\newtheorem{corollary}[theorem]{Corollary}
\theoremstyle{definition}
\newtheorem{remark}[theorem]{Remark}
\newcommand{\Rmnum}[1]{\expandafter\@slowromancap\romannumeral #1@}
\begin{document}
\title{Uniform Diophantine approximation with restricted denominators}

\author{Bo Wang\textsuperscript{ a}, Bing Li\textsuperscript{ a} and Ruofan Li\textsuperscript{ b,}\thanks{Corresponding author} \\
\small \it \textsuperscript{\rm a }School of Mathematics, South China University of Technology,\\
\small \it  Guangzhou, China, 510641\\
\small \it \textsuperscript{\rm b }Department of Mathematics, Jinan University, Guangzhou, China, 510632}
\date{\today}

\maketitle

\begin{center}
\begin{minipage}{120mm}
{\small {\bf Abstract.} Let $b\geq2$ be an integer and $A=(a_{n})_{n=1}^{\infty}$ be a strictly increasing subsequence of positive integers with $\eta:=\limsup\limits_{n\to\infty}\frac{a_{n+1}}{a_{n}}<+\infty$. For each irrational real number $\xi$, we denote by $\hat{v}_{b,A}(\xi)$ the supremum of the real numbers $\hat{v}$ for which, for every sufficiently large integer $N$, the equation $\|b^{a_n}\xi\|<(b^{a_N})^{-\hat{v}}$ has a solution $n$ with $1\leq n\leq N$. For every $\hat{v}\in[0,\eta]$, let $\hat{\mathcal{V}}_{b,A}(\hat{v})$ ($\hat{\mathcal{V}}_{b,A}^{\ast}(\hat{v})$) be the set of all real numbers $\xi$ such that $\hat{v}_{b,A}(\xi)\geq\hat{v}$ ($\hat{v}_{b,A}(\xi)=\hat{v}$) respectively. In this paper, we give some results of the Hausdorfff dimensions of $\hat{\mathcal{V}}_{b,A}(\hat{v})$ and $\hat{\mathcal{V}}_{b,A}^{\ast}(\hat{v})$. When $\eta=1$, we prove that the Hausdorfff dimensions of $\hat{\mathcal{V}}_{b,A}(\hat{v})$ and $\hat{\mathcal{V}}_{b,A}^{\ast}(\hat{v})$ are equal to $\left(\frac{1-\hat{v}}{1+\hat{v}}\right)^{2}$ for any $\hat{v}\in[0,1]$. When $\eta>1$ and $\lim_{n\to\infty}\frac{a_{n+1}}{a_{n}}$ exists, we show that the Hausdorfff dimension of $\hat{\mathcal{V}}_{b,A}(\hat{v})$ is strictly less than $\left(\frac{\eta-\hat{v}}{\eta+\hat{v}}\right)^{2}$ for some $\hat{v}$, which is different with the case $\eta=1$, and we give a lower bound of the Hausdorfff dimensions of $\hat{\mathcal{V}}_{b,A}(\hat{v})$ and $\hat{\mathcal{V}}_{b,A}^{\ast}(\hat{v})$ for any $\hat{v}\in[0,\eta]$. Furthermore, we show that this lower bound can be reached for some $\hat{v}$.}
\end{minipage}
\end{center}

\vskip0.5cm {\small{\bf Key words and phrases}. \ Uniform Diophantine approximation, $b$-ary expansion, Hausdorff dimension}
\footnotetext{\small \it E-mails addresses:  \rm 202010106047@mail.scut.edu.cn (B. Wang), scbingli@scut.edu.cn (B. Li), liruofan@jnu.edu.cn (R. Li). \\
\indent \indent MSC (2010) : 11J82 (primary), 11J25, 28A80, 37A45 (secondary).}

\section{Introduction}
Throughout this paper, $\|\cdot\|$ stands for the distance to the nearest integer. In 1842, Dirichlet \cite{L} proved the following theorem.
\begin{dthm}
For any $\xi\in\mathbb{R}$ and every real number $X\geq1$, there exists an integer with $1\leq n\leq X$, such that
\begin{equation*}
\|n\xi\|<\frac{1}{X}.
\end{equation*}
\end{dthm}
Following Waldschmidt \cite{MW}, we call Dirichlet's theorem a uniform approximation theorem. A weak form of it, called an asymptotic approximation theorem, was already known (see Legendre's book \cite{AML}) before Dirichlet: for any $\xi\in\mathbb{R}$, there exists infinitely many integers $n$ such that $\|n\xi\|<\frac{1}{n}$. Following the notations introduced in \cite{YM}, we denote by $\hat{w}_{1}(\xi)$ the supremum of the real number $\hat{w}$ such that, for all sufficiently large real number $X$, there exists $1\leq n\leq X$, satisfying $\|n\xi\|<X^{-w}$. And we denote by $w_{1}(\xi)$ the supremum of the real number $w$ such that, there exists infinitely many integers $n$, satisfying $\|n\xi\|<n^{-w}$. It follows form Dirichlet's theorem that $w_{1}(\xi)\geq\hat{w}_{1}(\xi)\geq1$. In 1926, Khintchine \cite{A} proved that $\hat{w}_{1}(\xi)=1$ for every irrational real number $\xi$. What is more, by the first Borel-Cantelli lemma \cite{GH}, we also know that $w_{1}(\xi)=1$ for almost all real numbers $\xi$. Here and below, ``almost all" always refers to the Lebesgue measure. Furthermore, a classical result, established independently by Jarn\'{\i}k \cite{VJ} and Besicovitch \cite{ASB}, asserts that
\begin{equation*}
\dim_{\rm H}\left(\{\xi\in\mathbb{R}: w_{1}(\xi)\geq w\}\right)=\frac{2}{w+1}\ {\rm for\ all}\ w\geq1,
\end{equation*}
where $\dim_{\rm H}$ stands for the Hausdorff dimension (we refer the reader to $\S\ref{20}$ for terminologies).
By Amou and Bugeaud's result \cite[Theorem 7]{MY}, the Hausdorff dimension of the set $\{\xi\in\mathbb{R}: w_{1}(\xi)=w\}$ also equals to $\frac{2}{w+1}$. Moreover, let $\psi:[X_{0},+\infty)\to\mathbb{R}_{>0}$ be a decreasing function with $X_{0}\geq1$ fixed. Here and throughout, $\mathbb{R}_{>0}=(0,+\infty)$. We denote by $D(\psi)$ the set of all real numbers $\xi$ such that for all sufficiently large real number $X$, there exists $1\leq n\leq X$, satisfying $\|n\xi\|<\psi(X)$. We call $D(\psi)$ the Dirichlet improvable set. Denote by $D(\psi)^{c}$ the complement of $D(\psi)$, we call $D(\psi)^{c}$ the Dirichlet non-improvable set. Let $\psi_{1}(X)=X^{-1}$. By Dirichlet's theorem, we know that $D(\psi_{1})=\mathbb{R}$. A classical result of Davenport and Schmidt \cite{HW} shows that for any $0<\varepsilon<1$, $D(\varepsilon\psi_{1})$ is a subset of the union of $\mathbb{Q}$ and the set of badly approximable numbers. Thus, $D(\varepsilon\psi_{1})^{c}$ has full Lebesgue measure. Furthermore, by Kleinbock and Wadleigh's result \cite[Theorem 1.7]{DN}, $D(\psi)^{c}\neq\emptyset$ whenever $\psi$ is decreasing and
\begin{equation*}
X\psi(X)<1\ {\rm for\ all\ large}\ X.
\end{equation*}
Moreover, they also gave the result of the Lebesgue measure of $D(\psi)^{c}$. To state their result, let
\begin{equation}\label{10}
\Psi(X):=\frac{X\psi(X)}{1-X\psi(X)}
\end{equation}
and denote by $\lambda$ the Lebesgue measure. Let $\psi$ be a decreasing non-negative function such that $\Psi$ is increasing and $X\psi(X)<1$ for all $X\geq X_{0}$, by \cite[Theorem 1.8]{DN}, we know that if
\begin{equation*}
\sum_{X}\frac{\log\Psi(X)}{X\Psi(X)}<\infty\ ({\rm respectively}=\infty),
\end{equation*}
then
\begin{equation*}
\lambda(D(\psi)^{c})=0\ ({\rm respectively}\ \lambda(D(\psi))=0).
\end{equation*}
Let $f$ be a dimension function, Hussain, Kleinbock, Wadleigh and Wang \cite[Theorem 1.6]{MDNB} have established the following result, which gives the $f$-dimensional Hausdorff measure of $D(\psi)^{c}$ for a large class of dimension functions. Let $\psi(X)$ be a decreasing non-negative function with $X\psi(X)<1$ for all large $X$. If a dimension function $f$ is essentially sub-linear, that is,
\begin{equation*}
{\rm there\ exists}\ B>1\ {\rm such\ that}\ \limsup_{x\to0}\frac{f(Bx)}{f(x)}<B,
\end{equation*}
then
\begin{equation*}
\mathcal{H}^{f}(D(\psi)^{c})=\begin{cases} 0, &{\rm if}\ \sum\limits_{X}Xf(\frac{1}{X^{2}\Psi(X)})<\infty, \\ \infty, &{\rm if}\ \sum\limits_{X}Xf(\frac{1}{X^{2}\Psi(X)})=\infty. \end{cases}
\end{equation*}
Hence,
\begin{equation*}
\dim_{\rm H}(D(\psi)^{c})=\frac{2}{2+\tau},\ {\rm where}\ \tau=\liminf\limits_{X\to\infty}\frac{\log\Psi(X)}{\log X}.
\end{equation*}
Furthermore, if $f$ is a non-essentially sub-linear dimension function, Bos, Hussain and Simmons \cite[\rm Theorem 1.6]{PMD} showed that
\begin{equation*}
\mathcal{H}^{f}(D(\psi)^{c})=\begin{cases} 0, &{\rm if}\ \sum\limits_{X}X\log(\Psi(X))f(\frac{1}{X^{2}\Psi(X)})<\infty, \\ \infty, &{\rm if}\ \sum\limits_{X}X\log(\Psi(X))f(\frac{1}{X^{2}\Psi(X)})=\infty. \end{cases}
\end{equation*}

In fact, there is some connection between the Dirichlet non-improvable set $D(\psi)^{c}$ and the $\psi$ well-approximable set $W(\psi)$, which is the set of all real numbers $\xi$ such that $\|n\xi\|<\psi(n)$ for infinitely many positive integers $n$. Let $\xi=[a_{0}(\xi);a_{1}(\xi),a_{2}(\xi),\ldots]$ be the continued fraction of $\xi$, and $\frac{p_{n}(\xi)}{q_{n}(\xi)}$ be the $n$-th convergent of $\xi$. Let $\Phi_{1},\Phi_{2}:[1,+\infty)\to\mathbb{R}_{>0}$ be two non-decreasing positive functions, denote
\begin{equation*}
\mathcal{K}(\Phi_{2}):=\{\xi\in\mathbb{R}:a_{n+1}(\xi)\geq\Phi_{2}(q_{n}(\xi))\ {\rm for\ infinitely\ many}\ n\in\mathbb{N}\}
\end{equation*}
and
\begin{equation*}
\mathcal{G}(\Phi_{1}):=\{\xi\in\mathbb{R}:a_{n}(\xi)a_{n+1}(\xi)\geq\Phi_{1}(q_{n}(\xi))\ {\rm for\ infinitely\ many}\ n\in\mathbb{N}\}.
\end{equation*}
When
\begin{equation*}
\Phi_{2}(X)=\frac{1}{\psi(X)X}\ {\rm and}\ \Phi_{1}(X)=\frac{\psi(X)X}{1-\psi(X)X},
\end{equation*}
by some elementary calculations (see \cite[\rm Lemma 2.2]{DN} or \cite[\rm p.2-3]{BBJ}), one has the inclusions
\begin{equation*}
\mathcal{K}(\Phi_{2})\subset W(\psi)\subset\mathcal{K}\left(\frac{1}{2}\Phi_{2}\right)\ {\rm and}\ \mathcal{G}(\Phi_{1})\subset D(\psi)^{c}\subset\mathcal{G}\left(\frac{1}{4}\Phi_{1}\right).
\end{equation*}
When $\Phi_{2}=\kappa\Phi_{1}$, where $\kappa$ is a positive constant, Bakhtawar, Bos and Hussain \cite[\rm Theorem 1.5]{APM} proved that the Hausdorff dimension of $\mathcal{G}(\Phi_{1})\setminus\mathcal{K}(\Phi_{2})$ is equal to the Hausdorff dimension of $\mathcal{K}(\Phi_{2})$.
Furthermore, Li, Wang and Xu \cite[\rm Proposition 1.7 and Theorem 1.8]{BBJ} also studied the Hausdorff dimension of $\mathcal{G}(\Phi_{1})\setminus\mathcal{K}(\Phi_{2})$ when $\lim\limits_{X\to\infty}\frac{\log\Phi_{1}(X)}{\log X}$ and $\lim\limits_{X\to\infty}\frac{\log\Phi_{2}(X)}{\log X}$ both exists. It is worth mentioning that they showed that the Hausdorff dimension of $\mathcal{G}(\Phi_{1})\setminus\mathcal{K}(\Phi_{2})$ will change greatly even if slightly modifying $\Phi_{1}$ by a constant. We refer the reader to \cite{BBJ} for more details.

To our knowledge, there are no complete results about the Hausdorff measure or the Hausdorff dimension of $D(\psi)$. However, Bugeaud and Liao \cite{YL} gave a result of the uniform Diophantine approximation related to $b$-ary expansions, where $b\geq2$ is an integer. To state their result, let us give some notations. For each irrational real number $\xi$, following the notation introduced in \cite{MY}, we denote by $v_{b}(\xi)$ the supremum of the real numbers $v$ for which the equation
\begin{equation*}
\|b^{n}\xi\|<(b^{n})^{-v}
\end{equation*}
has infinitely many solutions in positive integers. Following the notation introduced in \cite{YL}, we denote by $\hat{v}_{b}(\xi)$ the supremum of the real numbers $\hat{v}$ for which, for every sufficiently large integer $N$, the equation
\begin{equation*}
\|b^{n}\xi\|<(b^{N})^{-\hat{v}}
\end{equation*}
has a solution $n$ with $1\leq n\leq N$.

By the theory of continued fractions, we know that $\hat{v}_{b}(\xi)\leq1$. An easy covering argument implies that the set
\begin{equation*}
\{\xi\in\mathbb{R}:v_{b}(\xi)=0\}
\end{equation*}
has full Lebesgue measure. Since
\begin{equation*}
0\leq\hat{v}_{b}(\xi)\leq v_{b}(\xi),
\end{equation*}
we have $\hat{v}_{b}(\xi)=0$ for almost all real numbers $\xi$. For any $v\in \mathbb{R}_{\geq0}\cup\{+\infty\}$, let
\begin{equation*}
\mathcal{V}_{b}(v)=\{\xi\in\mathbb{R}:v_{b}(\xi)\geq v\}\ {\rm and}\ \mathcal{V}_{b}^{\ast}(v)=\{\xi\in\mathbb{R}:v_{b}(\xi)=v\}.
\end{equation*}
For any $\hat{v}\in[0,1]$, let
\begin{equation*}
\hat{\mathcal{V}}_{b}(\hat{v})=\{\xi\in\mathbb{R}:\hat{v}_{b}(\xi)\geq\hat{v}\}\ {\rm and}\ \hat{\mathcal{V}}_{b}^{\ast}(\hat{v})=\{\xi\in\mathbb{R}:\hat{v}_{b}(\xi)=\hat{v}\}.
\end{equation*}
By a general result of Borosh and Fraenkel \cite{IA} or the mass transference principle of Beresnevich and Velani \cite{VS}, for any $v\in\mathbb{R}_{\geq0}\cup\{+\infty\}$, we have
\begin{equation*}
\dim_{\rm H}(\mathcal{V}_{b}(v))=\frac{1}{1+v}.
\end{equation*}
What is more, it follows from \cite[Theorem 7]{MY} that the Hausdorff dimension of $\mathcal{V}_{b}^{\ast}(v)$ is also equal to $\frac{1}{1+v}$ for any $v\in\mathbb{R}_{\geq0}\cup\{+\infty\}$. In 2016, Bugeaud and Liao \cite[Theorem 1.1]{YL} have established that
\begin{equation*}
\dim_{\rm H}(\hat{\mathcal{V}}_{b}^{\ast}(\hat{v}))=\dim_{\rm H}(\hat{\mathcal{V}}_{b}(\hat{v}))=\left(\frac{1-\hat{v}}{1+\hat{v}}\right)^{2}
\end{equation*}
for all $\hat{v}\in[0,1]$.

In this paper, we restrict our attention to approximation by rational numbers whose denominator is a restricted power of some given integer $b\geq2$ and consider the following exponents of approximation. Let $A=(a_{n})_{n=1}^{\infty}\subset\mathbb{N}$ be a strictly increasing sequence with
\begin{equation*}
\eta=\eta(A):=\limsup_{n\to\infty}\frac{a_{n+1}}{a_{n}}<+\infty.
\end{equation*}
For a irrational real number $\xi$, similar to \cite{MY} and \cite{YL}, we define
\begin{equation*}
v_{b,A}(\xi):=\sup\{v\geq0: \|b^{a_{n}}\xi\|<(b^{a_{n}})^{-v}\ {\rm for\ i.m.}\ n\in\mathbb{N}\}
\end{equation*}
and
\begin{equation*}
\hat{v}_{b,A}(\xi):=\sup\left\{\hat{v}\geq0: \forall N\gg1, \min_{1\leq n\leq N}\|b^{a_{n}}\xi\|<(b^{a_{N}})^{-\hat{v}}\right\}.
\end{equation*}
Here and throughout, ``i.m.'' stands for ``infinitely many'' and ``$N\gg1$'' means ``$N$ large enough''. In our new notations, we have
\begin{equation*}
v_{b}(\xi)=v_{b,\mathbb{N}}(\xi)
\end{equation*}
and
\begin{equation*}
\hat{v}_{b}(\xi)=\hat{v}_{b,\mathbb{N}}(\xi).
\end{equation*}
By the definition of $v_{b,A}(\xi)$ and $\hat{v}_{b,A}(\xi)$, we know that
\begin{equation*}
0\leq\hat{v}_{b,A}(\xi)\leq v_{b,A}(\xi).
\end{equation*}
Furthermore, we have
\begin{equation}\label{17}
v_{b,A}(\xi)\leq v_{b}(\xi)
\end{equation}
and
\begin{equation}\label{18}
\hat{v}_{b,A}(\xi)\leq\eta\cdot\hat{v}_{b}(\xi)\leq\eta.
\end{equation}
Given $v\in\mathbb{R}_{\geq0}\cup\{+\infty\}$, let
\begin{equation*}
\mathcal{V}_{b,A}(v):=\{\xi\in\mathbb{R}:v_{b,A}(\xi)\geq v\}\ {\rm and}\ \mathcal{V}_{b,A}^{\ast}(v):=\{\xi\in\mathbb{R}: v_{b,A}(\xi)=v\}.
\end{equation*}
Given $\hat{v}\in[0,\eta]$, let
\begin{equation*}
\hat{\mathcal{V}}_{b,A}(\hat{v}):=\{\xi\in\mathbb{R}: \hat{v}_{b,A}(\xi)\geq\hat{v}\}\ {\rm and}\ \hat{\mathcal{V}}_{b,A}^{\ast}(\hat{v}):=\{\xi\in\mathbb{R}: \hat{v}_{b,A}(\xi)=\hat{v}\}.
\end{equation*}
By $\eqref{17}$ and $\eqref{18}$, we know that $\mathcal{V}_{b,A}^{\ast}(0)$ and $\hat{\mathcal{V}}_{b,A}^{\ast}(0)$ all have full Lebesgue measure. Moreover, by a general result of Borosh and Fraenkel \cite{IA} or the mass transference principle of Beresnevich and Velani \cite{VS}, for any $v\in\mathbb{R}_{\geq0}\cup\{+\infty\}$, we have
\begin{equation*}
\dim_{\rm H}(\mathcal{V}_{b,A}(v))=\frac{1}{1+v}.
\end{equation*}
By \cite[Theorem 1.1]{YL} and ($\ref{18}$), we know that
\begin{equation}\label{19}
\dim_{\rm H}(\hat{\mathcal{V}}_{b,A}^{\ast}(\hat{v}))\leq\dim_{\rm H}(\hat{\mathcal{V}}_{b,A}(\hat{v}))\leq\left(\frac{\eta-\hat{v}}{\eta+\hat{v}}\right)^{2}\ {\rm for\ any}\ \hat{v}\in[0,\eta].
\end{equation}
A natural question is whether
\begin{equation*}
\dim_{\rm H}(\hat{\mathcal{V}}_{b,A}^{\ast}(\hat{v}))=\dim_{\rm H}(\hat{\mathcal{V}}_{b,A}(\hat{v}))=\left(\frac{\eta-\hat{v}}{\eta+\hat{v}}\right)^{2}\ {\rm for\ all}\ \hat{v}\in[0,\eta]\ ?
\end{equation*}

Our main results are as follows.

Theorem $\ref{11}$ gives an affirmative answer when $\eta=1$. Theorem $\ref{12}$ gives an negative answer when $\eta>1$.
\begin{theorem}\label{11}
Let $b\geq2$ be an integer. If $\eta=1$, then for any $\hat{v}\in[0,1]$, we have
\begin{equation*}
\dim_{\rm H}(\hat{\mathcal{V}}_{b,A}^{\ast}(\hat{v}))=\dim_{\rm H}(\hat{\mathcal{V}}_{b,A}(\hat{v}))=\left(\frac{1-\hat{v}}{1+\hat{v}}\right)^{2}.
\end{equation*}
\end{theorem}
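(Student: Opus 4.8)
Since $\eta=1$, the upper bound $\dim_{\rm H}(\hat{\mathcal V}^{\ast}_{b,A}(\hat v))\le\dim_{\rm H}(\hat{\mathcal V}_{b,A}(\hat v))\le\left(\tfrac{1-\hat v}{1+\hat v}\right)^{2}$ is exactly $\eqref{19}$, so the only thing to prove is the matching lower bound $\dim_{\rm H}(\hat{\mathcal V}^{\ast}_{b,A}(\hat v))\ge\left(\tfrac{1-\hat v}{1+\hat v}\right)^{2}$; the endpoints are trivial ($\hat v=0$: full Lebesgue measure; $\hat v=1$: the claimed dimension is $0$), so assume $\hat v\in(0,1)$. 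The plan is to build, for each small $\varepsilon>0$, a closed set $K_{\varepsilon}\subseteq\hat{\mathcal V}^{\ast}_{b,A}(\hat v)$ through prescribed $b$-ary expansions, with $\dim_{\rm H}K_{\varepsilon}\to\left(\tfrac{1-\hat v}{1+\hat v}\right)^{2}$ as $\varepsilon\to0$. I will use repeatedly the elementary dictionary that $\|b^{a}\xi\|<b^{-\ell}$ means (up to a harmless $O(1)$ in $\ell$) that the $b$-ary digits of $\xi$ in positions $a+1,\dots,a+\ell$ all vanish, so that $\hat v_{b,A}(\xi)\ge\hat v$ amounts to: for every large $N$ there is $n\le N$ such that $\xi$ has at least $\hat v a_{N}$ consecutive zero digits beginning at a position of the form $a_{n}+1$.

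\textbf{The construction.} Since $\eta=1$ and $A$ is increasing we actually have $a_{n+1}/a_{n}\to1$, so $A$ is multiplicatively dense at infinity: the largest element of $A$ below a given $T$ is $(1-o(1))T$. Fix a large integer $M=M(\varepsilon)$ and choose recursively positions $\alpha_{1}<\alpha_{2}<\cdots$ with $\alpha_{j}-1\in A$ as follows. Given $\alpha_{j}$, set $z_{j}:=\lceil\tfrac{2\hat v}{1-\hat v}\alpha_{j}\rceil$ and freeze the digits of $\xi$ in positions $\alpha_{j},\dots,\alpha_{j}+z_{j}-1=:l_{j}$ to be $0$; then let $\alpha_{j+1}$ be the largest element of $A+1$ with $\alpha_{j+1}\le z_{j}/\hat v$, so that $\alpha_{j+1}=(1-o(1))\tfrac{2}{1-\hat v}\alpha_{j}$ by the density of $A$; on the intervening free positions $l_{j}+1,\dots,\alpha_{j+1}-1$ allow every $b$-ary word in which no digit is repeated more than $M$ times consecutively. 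Let $K_{\varepsilon}$ be the set of all $\xi\in[0,1)$ obeying these rules. This value of $z_{j}$ makes each free block have length $\approx\alpha_{j}$; among the admissible one-parameter family of such constructions it is the one maximizing the dimension, as is forced by $\eqref{19}$.

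\textbf{Verification that $\hat v_{b,A}\equiv\hat v$ on $K_{\varepsilon}$, and the dimension.} Let $\xi\in K_{\varepsilon}$. For large $N$ take $j$ maximal with $\alpha_{j}-1\le a_{N}$; then $a_{N}<\alpha_{j+1}-1\le z_{j}/\hat v$, and taking $n$ with $a_{n}=\alpha_{j}-1\le a_{N}$ we read off $\|b^{a_{n}}\xi\|<b^{-z_{j}}<b^{-\hat v a_{N}}$, whence $\hat v_{b,A}(\xi)\ge\hat v$. Conversely, let $N_{j}$ be the index with $a_{N_{j}}$ the largest element of $A$ below $\alpha_{j+1}-1$; every run of zeros of $\xi$ reachable from a position $a_{n}+1$ with $n\le N_{j}$ has length at most $z_{j}+2M$, while $a_{N_{j}}=(1-o(1))\alpha_{j+1}=(1-o(1))z_{j}/\hat v$, so $\min_{n\le N_{j}}\|b^{a_{n}}\xi\|\ge b^{-(z_{j}+2M+O(1))}>b^{-\hat v'' a_{N_{j}}}$ for every fixed $\hat v''>\hat v$ and all large $j$; hence $\hat v_{b,A}(\xi)\le\hat v$. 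Thus $K_{\varepsilon}\subseteq\hat{\mathcal V}^{\ast}_{b,A}(\hat v)$. Finally $K_{\varepsilon}$ is a digit-restricted Moran-type set, and the usual computation gives $\dim_{\rm H}K_{\varepsilon}=\tfrac{\log\lambda_{M}}{\log b}\,\liminf_{m\to\infty}\tfrac{g(m)}{m}$, where $g(m)$ counts the free positions up to $m$ and $\lambda_{M}$ is the growth rate of $b$-ary words with no $(M{+}1)$-fold repetition (so $\tfrac{\log\lambda_{M}}{\log b}\to1$ as $M\to\infty$); a geometric-series computation with $\alpha_{j+1}\approx\tfrac{2}{1-\hat v}\alpha_{j}$, $z_{j}\approx\tfrac{2\hat v}{1-\hat v}\alpha_{j}$ and free length $\approx\alpha_{j}$ shows the $\liminf$ is attained at the scales $m=l_{j}$ and equals $\left(\tfrac{1-\hat v}{1+\hat v}\right)^{2}$. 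Letting $\varepsilon\to0$ yields $\dim_{\rm H}(\hat{\mathcal V}^{\ast}_{b,A}(\hat v))\ge\left(\tfrac{1-\hat v}{1+\hat v}\right)^{2}$, which with $\eqref{19}$ proves the theorem.

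\textbf{The main obstacle.} Two points carry the weight. First, the dimension identity for $K_{\varepsilon}$ is a Frostman/mass-distribution argument: one equips $K_{\varepsilon}$ with the natural (uniform-on-admissible-words) measure and must verify that the geometric growth of $\alpha_{j}$ and $z_{j}$ is tame enough that no intermediate scale does worse than $m=l_{j}$, i.e. that the lower local Hölder exponent of this measure is genuinely $\liminf_{m}g(m)/m$. Second — and this is exactly where the hypothesis $\eta=1$ enters — one must track the two $o(1)$ errors produced by the discreteness of $A$ (between $z_{j}/\hat v$ and the chosen $\alpha_{j+1}\in A+1$, and between $\alpha_{j+1}-1$ and the chosen $a_{N_{j}}\in A$) and confirm that they do not break the simultaneous demands ``$\hat v_{b,A}\ge\hat v$ for every large $N$'' and ``$\hat v_{b,A}\le\hat v$ along $N_{j}$''; when $\eta=1$ both errors are $o(\alpha_{j+1})$ and the argument closes, whereas for $\eta>1$ this slack is precisely what makes the dimension drop below $\left(\tfrac{\eta-\hat v}{\eta+\hat v}\right)^{2}$ in Theorem~\ref{12}.
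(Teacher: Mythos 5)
Your construction is, up to repackaging, the paper's own: the set you build is exactly the Cantor set $E_{\theta_{0},\hat v}$ of Theorem \ref{15} specialized to the optimal parameter $\theta_{0}=\frac{2}{1-\hat v}$ (your $z_{j}/\alpha_{j}\approx\theta_{0}\hat v$ and $\alpha_{j+1}/\alpha_{j}\approx\theta_{0}$ are the paper's $m_{k}=\lfloor(1+\theta\hat v)a_{i_{k}}\rfloor$ and $a_{i_{k+1}}\approx\theta a_{i_{k}}$), and the dimension count via the mass distribution principle with the liminf attained at the ends of the zero blocks is the same computation. The differences are organizational: the paper first proves the two-parameter statement (Theorems \ref{14}--\ref{15}) for all admissible $\theta$ and then optimizes, which it needs anyway for Theorems \ref{12}--\ref{13}, and it re-derives the upper bound from Theorem \ref{14} rather than quoting \eqref{19} (your use of \eqref{19} is legitimate, since it is established in the introduction from Bugeaud--Liao). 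Your $M$-fold-repetition rule on the free digits plays the role of the paper's periodic markers $x_{m_{k}+t(m_{k}-a_{i_{k}})}=1$, and has the mild advantage of treating $b=2$ and $b\ge3$ uniformly.

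One step of your verification is wrong as written. You claim that every run of zeros reachable from a position $a_{n}+1$ with $n\le N_{j}$ has length at most $z_{j}+2M$, where $a_{N_{j}}$ is the largest element of $A$ below $\alpha_{j+1}-1$. But your free block runs all the way up to position $\alpha_{j+1}-1$ with no forced nonzero digit there, so if $A$ has a small gap near $\alpha_{j+1}$ (e.g.\ $a_{N_{j}}=\alpha_{j+1}-2$) and the trailing free digits are $0$, the run starting at $a_{N_{j}}+1$ connects to the $(j{+}1)$-st zero block and has length about $z_{j+1}\approx\frac{2\hat v}{1-\hat v}\,a_{N_{j}}\gg\hat v\,a_{N_{j}}$, so $\min_{n\le N_{j}}\|b^{a_{n}}\xi\|$ need not be $\ge b^{-(z_{j}+2M+O(1))}$. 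The conclusion $\hat v_{b,A}(\xi)\le\hat v$ still holds, but you must either pull $N_{j}$ back to the largest index with $a_{N_{j}}\le\alpha_{j+1}-M-2$ (still $(1-o(1))\alpha_{j+1}$ since $\eta=1$, so the ratio $z_{j}/a_{N_{j}}\to\hat v$ survives), or do what the paper does and force a nonzero digit at the position $\alpha_{j+1}-1\in A$ (its condition $x_{a_{i_{k}}}=1$), which is precisely what blocks a run from one free region from leaking into the next zero block. With that one repair the argument closes.
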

\begin{theorem}\label{12}
Let $b\geq2$ be an integer. If $\lim\limits_{n\to\infty}\frac{a_{n+1}}{a_{n}}$ exists and $$1<\lim_{n\to\infty}\frac{a_{n+1}}{a_{n}}<+\infty,$$ denote
\begin{equation*}
l_{0}=\max\left\{1,\left\lfloor\frac{\log2-\log(\eta-1)}{\log\eta}\right\rfloor+1\right\}.
\end{equation*}
For any
\begin{equation*}
\hat{v}\in\bigcup_{l=l_{0}}^{\infty}\left( \eta-\frac{2\eta}{\eta^{l}+1},\eta-\frac{2}{\eta^{l}}\right),
\end{equation*}
we have
\begin{equation*}
\begin{aligned}
\dim_{\rm H}(\hat{\mathcal{V}}_{b,A}(\hat{v}))&\leq\max\left\{\frac{\eta^{l_{1}+1}-1-\eta^{l_{1}}\hat{v}}{(\eta^{l_{1}+1}-1)(1+\eta^{l_{1}}\hat{v}) },\frac{\eta^{l_{1}}-1-\eta^{l_{1}-1}\hat{v}}{\eta^{l_{1}-1}(\eta(\eta^{l_{1}}-1)-\hat{v})}\right\}\\
&<\left(\frac{\eta-\hat{v}}{\eta+\hat{v}}\right)^{2},
\end{aligned}
\end{equation*}
where
\begin{equation*}
l_{1}=l_{1}(\hat{v}):=\left\lfloor\frac{\log2-\log(\eta-\hat{v})}{\log\eta}\right\rfloor+1.
\end{equation*}
\end{theorem}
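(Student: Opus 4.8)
I would first rewrite everything in terms of the base-$b$ expansion $\xi=\lfloor\xi\rfloor+\sum_{k\ge1}d_kb^{-k}$. Up to a one-digit error, $\|b^{m}\xi\|<b^{-t}$ holds iff the block $d_{m+1}\cdots d_{m+\lfloor t\rfloor}$ is constantly $0$ or constantly $b-1$. Setting $r_n(\xi):=-\log_b\|b^{a_n}\xi\|$ — essentially the length of the maximal constant digit-block starting at position $a_n+1$ — one gets $\hat{v}_{b,A}(\xi)=\liminf_{N\to\infty}\tfrac1{a_N}\max_{1\le n\le N}r_n(\xi)$. Hence $\xi\in\hat{\mathcal{V}}_{b,A}(\hat{v})$ means, essentially: if $n_1<n_2<\cdots$ are the \emph{record} indices (those $n$ at which $\max_{m\le n}r_m$ strictly increases) and $\rho_j:=r_{n_j}(\xi)$ the corresponding records, then $\rho_j\ge\hat{v}\,a_{n_{j+1}-1}$ for all large $j$. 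Because $\lim a_{n+1}/a_n=\eta$, writing $g_j:=n_{j+1}-n_j$ we have $a_{n_{j+1}-1}\sim\eta^{\,g_j-1}a_{n_j}$, so a minimal admissible $j$-th constant block fills the digit positions $\big[a_{n_j},\,a_{n_j}(1+\hat{v}\eta^{\,g_j-1})\big]$, and the $j$-th and $(j+1)$-st blocks are disjoint exactly when $\hat{v}<\eta-\eta^{\,1-g_j}$.

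\textbf{Step 2: a multiscale cover at block endpoints.} For $\xi\in\hat{\mathcal{V}}_{b,A}(\hat{v})$ and a large level $T$, an admissible level-$T$ $b$-ary cylinder is obtained by choosing, for each record block with left end $<T$, whether it is a $0$-block or a $(b-1)$-block (two choices), and then choosing freely the $F(T)$ ``unconstrained'' digits below $T$; since there are only $O(\log T)$ record blocks below $T$, there are $b^{\,F(T)+o(T)}$ such cylinders, of diameter $b^{-T}$. I would cover $\hat{\mathcal{V}}_{b,A}(\hat{v})$ along the scales $T_j:=a_{n_j}+\rho_j$ (the right endpoints of the long blocks, where $F(\cdot)/(\cdot)$ is locally smallest). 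A geometric summation, using $\rho_i\sim\hat{v}\eta^{-1}a_{n_{i+1}}$, gives
\[
\dim_{\rm H}\hat{\mathcal{V}}_{b,A}(\hat{v})\ \le\ \sup_{(g_i)\subset\mathbb{Z}_{\ge1}}\ \liminf_{j\to\infty}\frac{F(T_j)}{T_j},
\]
and for an eventually constant gap sequence $g_i\equiv g$ with disjoint blocks the liminf equals
\[
\beta(g):=\frac{(\eta^{g}-1)-\hat{v}\eta^{\,g-1}}{(\eta^{g}-1)\big(1+\hat{v}\eta^{\,g-1}\big)}.
\]
A steady-state argument then shows that non-constant gap patterns, and patterns forcing blocks to merge, never increase the liminf, so the supremum is attained along arithmetic-progression records and equals the maximum of $\beta(g)$ over the admissible integers $g$.

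\textbf{Step 3: optimisation over the gap and strict inequality.} Setting $x=\eta^{\,g-1}>0$ and treating it as continuous, $\beta(x)=\dfrac{(\eta-\hat{v})x-1}{(\eta x-1)(1+\hat{v}x)}$ is strictly unimodal with unique maximum at $x^{\ast}=\tfrac{2}{\eta-\hat{v}}$, i.e. at $g^{\ast}=1+\log_\eta\tfrac2{\eta-\hat{v}}$, where $\beta=\big(\tfrac{\eta-\hat{v}}{\eta+\hat{v}}\big)^{2}$ — this recovers \eqref{19}. Now $l_1=\lfloor g^{\ast}\rfloor$, and for $\hat{v}$ in the stated union of intervals one has $g^{\ast}\in(l_1,l_1+1)$, so the admissible integer gaps closest to the optimum are $l_1$ and $l_1+1$; the hypotheses $\hat{v}>1$ and the definition of $l_0$ are precisely what force $l_1\ge l_0$ and exclude the small gaps that would make consecutive blocks overlap, leaving exactly these two in competition. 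One has $A=\beta(l_1+1)$ (disjoint blocks), while the smaller gap $g=l_1$ — where the precise, non-asymptotic positions of successive blocks and their possible interaction must be accounted for — yields the second bound $B$; hence $\dim_{\rm H}\hat{\mathcal{V}}_{b,A}(\hat{v})\le\max\{A,B\}$. Finally, strict unimodality of $\beta$ together with $g^{\ast}\notin\mathbb{Z}$ gives $\beta(l_1)<\beta(g^{\ast})$ and $\beta(l_1+1)<\beta(g^{\ast})$; propagating this to $A$ and $B$ yields $\max\{A,B\}<\big(\tfrac{\eta-\hat{v}}{\eta+\hat{v}}\big)^{2}$.

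\textbf{Expected main difficulty.} The crux is Step 2: converting ``$\xi$ possesses an admissible record structure'' into one countable cover that is efficient \emph{uniformly in $\xi$} — in particular controlling the strata where the records fail to grow at a fixed rate and those where consecutive constant blocks overlap (so several record indices lie inside one long run) — and then making the optimisation in Step 3 sharp enough to land strictly below $\big(\tfrac{\eta-\hat{v}}{\eta+\hat{v}}\big)^{2}$. The source of the strict gap is exactly the discreteness of $g\in\mathbb{Z}_{\ge1}$: when $\eta=1$ the factor $\eta^{g}$ degenerates and the continuum optimum $g^{\ast}$ is always reachable (Theorem \ref{11}), whereas for $\eta>1$ and $\hat{v}$ in these ranges $g^{\ast}$ lies strictly between two integers and cannot be hit.
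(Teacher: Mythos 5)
Your overall strategy coincides with the paper's: use $\lim a_{n+1}/a_n=\eta$ to show that the ``record'' gaps quantize the possible values of $\theta:=v_{b,A}(\xi)/\hat v$, run a block-counting cover whose exponent is the unimodal function $f(\theta)=\frac{(\eta-\hat v)\theta-1}{(\eta\theta-1)(1+\hat v\theta)}$ (your $\beta$ in the variable $x=\eta^{g-1}$), and observe that for the stated range of $\hat v$ the continuum optimum $\theta^{\ast}=2/(\eta-\hat v)$ is excluded. The paper packages the cover as the $\rho$-perturbed upper bound of Theorem \ref{14} together with the emptiness Lemma \ref{41}, and then takes a countable union over \emph{rational} $\theta$; this is precisely how it resolves the uniformity/countability issue you flag at the end, which your ``steady-state argument'' leaves unaddressed.

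There is, moreover, a genuine gap exactly at the point you defer. For a fixed gap $g$ the admissible ratios satisfy $\eta^{g-1}\le\theta\le\frac{\eta^{g}-1}{\hat v}$ (minimal block length $\hat v\,a_{n_{j+1}-1}$ versus the block not overrunning $a_{n_{j+1}}$), and the covering exponent for that stratum is $\sup f(\theta)$ over this whole interval, not $f(\eta^{g-1})=\beta(g)$. For $g=l_1+1$ the interval lies to the right of $\theta^{\ast}$, where $f$ is decreasing, so the supremum is indeed $\beta(l_1+1)$ at the left endpoint; but for $g=l_1$ the interval lies entirely to the left of $\theta^{\ast}$, where $f$ is increasing, so the supremum sits at the \emph{right} endpoint $\theta=\frac{\eta^{l_1}-1}{\hat v}$ (consecutive blocks nearly touching), and $f\bigl(\frac{\eta^{l_1}-1}{\hat v}\bigr)$ is exactly the second term $\frac{\eta^{l_1}-1-\eta^{l_1-1}\hat v}{\eta^{l_1-1}(\eta(\eta^{l_1}-1)-\hat v)}$ of the theorem. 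Consequently your Step 2 conclusion that the answer is ``the maximum of $\beta(g)$ over the admissible integers $g$'' is false as stated — it would undershoot the true bound on the $g=l_1$ stratum — and your quantity $B$ is never actually computed. To close the argument you must (i) derive the endpoint value $f\bigl(\frac{\eta^{l_1}-1}{\hat v}\bigr)$ for the small-gap stratum, and (ii) make the supremum over uncountably many gap/ratio patterns rigorous, e.g.\ by stratifying over rational $\theta$ with the $\rho$-fattened bound of Theorem \ref{14}. Once both terms are in place, the strict inequality does follow, as you say, from the unimodality of $f$ and $\frac{\eta^{l_1}-1}{\hat v}<\theta^{\ast}<\eta^{l_1}$.
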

The following theorem gives a lower bound of $\dim_{\rm H}(\hat{\mathcal{V}}_{b,A}^{\ast}(\hat{v}))$ when $\lim\limits_{n\to\infty}\frac{a_{n+1}}{a_{n}}$ exists and $1<\lim\limits_{n\to\infty}\frac{a_{n+1}}{a_{n}}<+\infty$.
\begin{theorem}\label{13}
Let $b\geq2$ be an integer. If $\lim\limits_{n\to\infty}\frac{a_{n+1}}{a_{n}}$ exists and $$1<\lim_{n\to\infty}\frac{a_{n+1}}{a_{n}}<+\infty,$$ then for any $\hat{v}\in(0,\eta)$, we have
\begin{equation*}
\dim_{\rm H}(\hat{\mathcal{V}}_{b,A}^{\ast}(\hat{v}))\geq\frac{\eta^{\tilde{l}+1}-1-\eta^{\tilde{l}}\hat{v}}{(\eta^{\tilde{l}+1}-1)(1+\eta^{\tilde{l}}\hat{v})},
\end{equation*}
where
\begin{equation*}
\tilde{l}=\tilde{l}(\hat{v}):=\max\left\{1,\left\lfloor\frac{\log(\eta+1)-\log(\eta-\hat{v})}{\log\eta}\right\rfloor\right\}.
\end{equation*}
\end{theorem}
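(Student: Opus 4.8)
The plan is to construct, by prescribing $b$-ary digits, an explicit Cantor-type set $\mathcal{C}\subseteq\hat{\mathcal{V}}_{b,A}^{\ast}(\hat v)$ whose Hausdorff dimension equals the asserted bound, and to compute $\dim_{\rm H}\mathcal{C}$ through the mass distribution principle. Set $l:=\tilde l+1$, fix a large $n_{1}$, and take the ``marker'' indices to be $n_{j}:=n_{1}+(j-1)l$, writing $m_{j}:=a_{n_{j}}$. Because $\lim_{n\to\infty}a_{n+1}/a_{n}=\eta$ --- this is the only place we use the existence of the limit rather than merely the $\limsup$ --- we have $a_{n_{j}+i}/m_{j}\to\eta^{i}$ for each fixed $i\ge 0$; in particular $m_{j+1}/m_{j}\to\eta^{l}$ and $a_{n_{j+1}-1}/m_{j}\to\eta^{l-1}$. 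From the definition of $\tilde l$ one checks that $\eta^{\tilde l}(\eta-\hat v)>1$, equivalently $\hat v\,\eta^{l-1}<\eta^{l}-1$, which is exactly what will make the construction non-degenerate.

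Let $\mathcal{C}$ consist of all $\xi=0.\varepsilon_{1}\varepsilon_{2}\cdots$ (base $b$) such that, for every $j\ge 1$, the block $\varepsilon_{m_{j}+1}\cdots\varepsilon_{m_{j}+L_{j}}$ equals $(b-1)^{L_{j}}$, where $L_{j}:=\lfloor\hat v\,a_{n_{j+1}-1}\rfloor+\lfloor\sqrt{m_{j}}\rfloor$, while the remaining (``free'') digits --- those in the blocks $\varepsilon_{m_{j}+L_{j}+1}\cdots\varepsilon_{m_{j+1}}$, of length $T_{j}:=m_{j+1}-m_{j}-L_{j}$ --- are arbitrary except that no such block may contain $C_{j}+1$ consecutive $0$'s or $C_{j}+1$ consecutive $(b-1)$'s, with $C_{j}:=\lceil 3\log_{b}m_{j}\rceil$. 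Since $\hat v\,\eta^{l-1}<\eta^{l}-1$ we have $T_{j}\sim m_{j}\,(\eta^{l}-1-\hat v\,\eta^{l-1})$, a fixed positive proportion of $m_{j}$; and a union bound over forbidden runs (legitimate because $b^{C_{j}}\gg T_{j}$) shows that the number $P_{j}$ of admissible free blocks satisfies $\log_{b}P_{j}=T_{j}-o(1)$, so the run restriction costs nothing in dimension. One also checks that $\mathcal{C}$ consists of irrational numbers (the lengths $L_{j}$ are unbounded, so no $\xi\in\mathcal{C}$ has eventually periodic expansion).

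Next I would verify $\mathcal{C}\subseteq\hat{\mathcal{V}}_{b,A}^{\ast}(\hat v)$. For $\hat v_{b,A}(\xi)\ge\hat v$: given large $N$ choose $j$ with $n_{j}\le N<n_{j+1}$ and use $n=n_{j}$, so that $\|b^{a_{n_{j}}}\xi\|\le b^{-L_{j}}\le b^{-\hat v\,a_{n_{j+1}-1}}\le b^{-\hat v\,a_{N}}$, the term $\lfloor\sqrt{m_{j}}\rfloor$ providing the strict inequality required by every exponent $<\hat v$. For $\hat v_{b,A}(\xi)\le\hat v$, the point is that the indices $N=n_{j+1}-1$ are bottlenecks: for such $N$ and every $n\le N$, the run of equal digits starting just after position $a_{n}$ has length at most $\hat v\,a_{N}+o(a_{N})$. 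The two nontrivial cases are $n=n_{j}$, which gives a run of length $\le L_{j}+C_{j}=\hat v\,a_{N}+o(a_{N})$, and the indices $n_{j}+i$ ($1\le i<l$) whose position $a_{n_{j}+i}\sim\eta^{i}m_{j}$ falls inside the marker run --- the only remaining possibility since consecutive terms of $A$ near $m_{j}$ are $\asymp m_{j}$ apart --- for which the run has length $\sim m_{j}(1+\hat v\,\eta^{l-1}-\eta^{i})<\hat v\,\eta^{l-1}m_{j}\sim\hat v\,a_{N}$ because $\eta^{i}>1$; all other indices yield runs lying in free blocks or coming from earlier stages, hence of length $\le\max_{i<j}(L_{i}+C_{i})\le\eta^{-l}\hat v\,a_{N}(1+o(1))<\hat v\,a_{N}$ or $o(a_{N})$. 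Consequently, for every $\hat v'>\hat v$ there are infinitely many $N$ with $\min_{1\le n\le N}\|b^{a_{n}}\xi\|>b^{-\hat v'a_{N}}$, whence $\hat v_{b,A}(\xi)\le\hat v'$; letting $\hat v'\downarrow\hat v$ gives $\hat v_{b,A}(\xi)=\hat v$.

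Finally I would compute $\dim_{\rm H}\mathcal{C}$. Let $\mu$ be the probability measure on $\mathcal{C}$ obtained by splitting mass uniformly among admissible free blocks at each stage, and let $N_{j}$ be the number of generation-$(m_{j}+L_{j})$ $b$-adic cylinders meeting $\mathcal{C}$; a geometric summation gives $\log_{b}N_{j}\sim m_{j}\cdot\frac{\eta^{l}-1-\hat v\,\eta^{l-1}}{\eta^{l}-1}$, while each such cylinder has $\mu$-mass $N_{j}^{-1}$ and length $b^{-(m_{j}+L_{j})}$ with $m_{j}+L_{j}\sim m_{j}(1+\hat v\,\eta^{l-1})$. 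The crux is to bound the $\mu$-mass of an arbitrary cylinder of generation $k$ for \emph{every} $k$: inside a free block the quantity $\frac{1}{k}\log_{b}(\#\,\text{cylinders present at generation }k)$ is increasing in $k$, whereas inside a marker run it is decreasing (forced digits shrink cylinders without creating new ones), so the worst scales are $k=m_{j}+L_{j}$, the ends of the runs, where $\mu(\text{cylinder})\le b^{-ks_{j}}$ with $s_{j}=\frac{\log_{b}N_{j}}{m_{j}+L_{j}}-o(1)\longrightarrow\frac{\eta^{l}-1-\hat v\,\eta^{l-1}}{(\eta^{l}-1)(1+\hat v\,\eta^{l-1})}$. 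Converting this into a ball estimate (a ball of radius $<b^{-k}$ meets a bounded number of generation-$k$ cylinders) and invoking the mass distribution principle gives, since $l=\tilde l+1$,
\[
\dim_{\rm H}\hat{\mathcal{V}}_{b,A}^{\ast}(\hat v)\ \ge\ \dim_{\rm H}\mathcal{C}\ \ge\ \frac{\eta^{l}-1-\hat v\,\eta^{l-1}}{(\eta^{l}-1)(1+\hat v\,\eta^{l-1})}\ =\ \frac{\eta^{\tilde l+1}-1-\eta^{\tilde l}\hat v}{(\eta^{\tilde l+1}-1)(1+\eta^{\tilde l}\hat v)}.
\]
The main obstacle is precisely this last scale analysis: a naive count using only the scales $b^{-m_{j}}$ yields the strictly larger value $\frac{\eta^{l}-1-\hat v\,\eta^{l-1}}{\eta^{l}-1}$, and it is the ``idle'' scales inside the marker runs --- the price of the uniformity built into $\hat v_{b,A}$ --- that create the extra factor $1+\eta^{\tilde l}\hat v$ in the denominator.
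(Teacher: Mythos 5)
Your proposal is correct and follows essentially the same route as the paper: a Cantor-type set with markers at every $(\tilde l+1)$-th term of $A$, forced runs of length $\sim\hat v\,a_{n_{j+1}-1}$ after each marker, verification that $\hat v_{b,A}=\hat v$ on this set, and the mass distribution principle with the worst scales at the ends of the forced runs (the paper packages this as Lemma 4.2 with $\theta=\eta^{\tilde l}$, reusing the measure-theoretic computation from its Theorem 1.5). The only real difference is technical: where you forbid runs of length $>C_j\asymp\log m_j$ in the free blocks and absorb the entropy loss by a union bound, the paper instead plants isolated nonzero digits at arithmetic-progression positions inside each free region; both devices control the spurious runs at zero cost in dimension.
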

\begin{remark}
By $\eqref{19}$ and Theorem $\ref{13}$, we know that if $\hat{v}=\eta-\frac{2}{\eta^{l}}$ for some postive integer $l$, then
\begin{equation*}
\dim_{\rm H}(\hat{\mathcal{V}}_{b,A}^{\ast}(\hat{v}))=\dim_{\rm H}(\hat{\mathcal{V}}_{b,A}(\hat{v}))=\left(\frac{\eta-\hat{v}}{\eta+\hat{v}}\right)^{2}.
\end{equation*}
\end{remark}
Theorems $\ref{11}$, $\ref{12}$ and $\ref{13}$ follow from the following two theorems, in which the values of both functions $v_{b,A}$ and $\hat{v}_{b,A}$ are prescribed.
\begin{theorem}\label{14}
Let $b\geq2$ be an integer. Let $\theta$ and $\hat{v}$ be positive real numbers with $\hat{v}<\eta$. If $\theta<\max\left\{1,\frac{1}{\eta-\hat{v}}\right\}$, then
\begin{equation*}
\hat{\mathcal{V}}_{b,A}(\hat{v})\cap\mathcal{V}_{b,A}^{\ast}(\theta\hat{v})=\emptyset.
\end{equation*}
If $\theta\geq\max\left\{1,\frac{1}{\eta-\hat{v}}\right\}$, then
\begin{equation*}
\dim_{\rm H}(\hat{\mathcal{V}}_{b,A}(\hat{v})\cap\mathcal{V}_{b,A}^{\ast}(\theta\hat{v}))\leq\frac{\eta\theta-1-\theta\hat{v}}{(\eta\theta-1)(1+\theta\hat{v})}.
\end{equation*}
Moreover, for every $\theta\geq\max\left\{1,\frac{1}{\eta-\hat{v}}\right\}$ and all $\rho\geq0$, we have
\begin{equation*}
\dim_{\rm H}(\{\xi\in\mathbb{R}:\hat{v}_{b,A}(\xi)\geq\hat{v}\ {\rm and}\ \theta\hat{v}\leq v_{b,A}(\xi)\leq\theta\hat{v}+\rho\})
\leq\frac{\hat{v}(\eta\theta-1-\theta\hat{v})+\rho(\eta-\hat{v})}{(1+\theta\hat{v})((\eta\theta-1)\hat{v}+\eta\rho)}.
\end{equation*}
Furthermore,
\begin{equation*}
\dim_{\rm H}(\hat{\mathcal{V}}_{b,A}(\eta))=0.
\end{equation*}
\end{theorem}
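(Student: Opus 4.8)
Everything is translated into the $b$-ary expansion of $\xi$. Writing $\xi=\lfloor\xi\rfloor+\sum_{k\ge 1}\varepsilon_kb^{-k}$, let $s_n=s_n(\xi)$ be the length of the maximal string of digits all equal to $0$, or all equal to $b-1$, which starts at position $a_n+1$ (and $s_n=0$ if $\varepsilon_{a_n+1}\notin\{0,b-1\}$); then $b^{-s_n-1}\le\|b^{a_n}\xi\|\le b^{-s_n+1}$, so that, with $M_N:=\max_{1\le n\le N}s_n$, one has $v_{b,A}(\xi)=\limsup_n s_n/a_n$ and $\hat v_{b,A}(\xi)=\liminf_N M_N/a_N$. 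Call $n$ a \emph{record} if $s_n>s_k$ for all $k<n$; one checks that the records $m_1<m_2<\cdots$ are exactly the indices detecting the pairwise disjoint maximal constant blocks, record $i$ giving a block that contains the digit positions $(a_{m_i},a_{m_i}+S_i]$ with $S_i:=s_{m_i}$, that $Q_i:=a_{m_i}+S_i\le a_{m_{i+1}}=:P_{i+1}$ (else $m_{i+1}$ could not be a record), and that $M_N=S_i$ for $m_i\le N<m_{i+1}$. Since $s_n/a_n$ is maximal over each block at its left end, $v_{b,A}(\xi)=\limsup_i S_i/P_i$, where $P_i:=a_{m_i}$; and when $\hat v_{b,A}(\xi)\ge\hat v>0$ the $M_N$ tend to infinity, so there are infinitely many records and $S_i\to\infty$.

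From the hypotheses I extract a chain of inequalities. If $\hat v_{b,A}(\xi)\ge\hat v$, then for each $\delta>0$ and all large $N$ one has $M_N\ge(\hat v-\delta)a_N$; at $N=m_i$ this gives $P_i\le S_i/(\hat v-\delta)$, while at $N=m_{i+1}-1$, using $\limsup a_{n+1}/a_n=\eta$, it gives $Q_i\le P_{i+1}\le(\eta+\delta)a_{m_{i+1}-1}\le(\eta+\delta)S_i/(\hat v-\delta)$, hence $P_i\le\bigl(\tfrac{\eta-\hat v}{\hat v}+O(\delta)\bigr)S_i$ and so $S_i/P_i\ge\tfrac{\hat v}{\eta-\hat v}-O(\delta)$ for all large $i$. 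Letting $\delta\to 0$ yields $v_{b,A}(\xi)\ge\hat v/(\eta-\hat v)$; with the known inequality $v_{b,A}(\xi)\ge\hat v_{b,A}(\xi)\ge\hat v$ this forces $v_{b,A}(\xi)\ge\hat v\max\{1,1/(\eta-\hat v)\}$. Thus if moreover $v_{b,A}(\xi)=\theta\hat v$ then $\theta\ge\max\{1,1/(\eta-\hat v)\}$, which is the first (emptiness) assertion. Finally, writing $v:=v_{b,A}(\xi)$, the bound $s_n\le(v+\delta)a_n$ for large $n$ together with $P_{i+1}\le(\eta+\delta)S_i/(\hat v-\delta)$ gives the growth estimate $S_{i+1}/S_i\le\tfrac{\eta v}{\hat v}+O(\delta)$ for all large $i$.

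For the dimension bounds I cover the set by $b$-adic intervals at well-chosen scales. Fix $\xi$ with $\hat v_{b,A}(\xi)\ge\hat v$ and with $v:=v_{b,A}(\xi)$ equal to $\theta\hat v$ (resp.\ lying in $[\theta\hat v,\theta\hat v+\rho]$). Since $\limsup_i S_i/P_i=v$, there are infinitely many records $i$ with $S_i/P_i>v-\delta$; for such $i$, among the first $Q_i$ digits of $\xi$ at least $\sum_{j\le i}S_j$ are forced (lying in record blocks), these blocks are at most $O(\sqrt{Q_i})$ constant runs (their lengths $S_1<S_2<\cdots$ are strictly increasing positive integers), while $Q_i=(1+S_i/P_i)P_i\le\tfrac{1+v}{v-\delta}S_i$ and, by the growth estimate, $\sum_{j\le i}S_j\ge S_i\bigl(\tfrac{\eta v}{\eta v-\hat v}-O(\delta)\bigr)$. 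Hence the proportion of free digits among the first $Q_i$ is at most $g(v)+O(\delta)$, where
\[
g(v):=1-\frac{v}{1+v}\cdot\frac{\eta v}{\eta v-\hat v}=\frac{\eta v-\hat v(1+v)}{(1+v)(\eta v-\hat v)}.
\]
After the routine reduction to a countable union of subsets on which these asymptotic estimates hold from a fixed index on, every point of such a subset is, at the scales $b^{-Q_i}$, contained in a $b$-adic interval determined by its $\le(g(v)+O(\delta))Q_i$ free digits, the $\le O(\sqrt{Q_i})$ run values, and the $(Q_i)^{O(\sqrt{Q_i})}=b^{o(Q_i)}$ choices of the record-block endpoints; summing the $s'$-th powers of the diameters over this cover and letting $Q_i\to\infty$ gives $\mathcal H^{s'}=0$ for every $s'>g(v)$, so $\dim_{\rm H}\le g(v)$. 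One computes $g(\theta\hat v)=\tfrac{\eta\theta-1-\theta\hat v}{(\eta\theta-1)(1+\theta\hat v)}$, which is the second assertion. For the third, the set in question is the countable union over $v\in[\theta\hat v,\theta\hat v+\rho]$ of the sets just bounded, and an elementary analysis of $g$ (it increases on $[\tfrac{\hat v}{\eta-\hat v},\tfrac{2\hat v}{\eta-\hat v}]$ and decreases afterward, with maximum $\bigl(\tfrac{\eta-\hat v}{\eta+\hat v}\bigr)^2$) shows $\sup_{v\in[\theta\hat v,\theta\hat v+\rho]}g(v)$ is at most the displayed fraction $\tfrac{\hat v(\eta\theta-1-\theta\hat v)+\rho(\eta-\hat v)}{(1+\theta\hat v)((\eta\theta-1)\hat v+\eta\rho)}$. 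The last assertion $\dim_{\rm H}(\hat{\mathcal V}_{b,A}(\eta))=0$ is immediate from $\eqref{19}$ at $\hat v=\eta$ (equivalently, let $\hat v\uparrow\eta$ there).

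The main obstacle is the combinatorial structure theory of the first two paragraphs: showing that the interaction between the constant blocks of $\xi$ and the multiplicative gaps of $A$ is captured exactly by the chain $P_i\le S_i/(\hat v-\delta)$, $Q_i\le P_{i+1}\le(\eta+\delta)S_i/(\hat v-\delta)$, $S_i\le(v+\delta)P_i$, and — crucially for the covering — that no further digits are forced beyond the record blocks (so the bound is not weakened) while the admissible block configurations up to scale $R$ number only $b^{o(R)}$. The remaining items (controlling the $O(1)$ losses in the digit translation, making the $\delta$'s uniform over the relevant subsets, and the arithmetic reductions of $g$, including the optimisation over $v$ in the $\rho$-version) are routine but require care.
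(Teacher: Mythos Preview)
Your approach is essentially the paper's: translate to $b$-ary digits, extract the strictly increasing ``record'' blocks, derive the chain $P_i+S_i\le P_{i+1}\lesssim \eta S_i/\hat v$ and the growth ratio $S_{i+1}/S_i\lesssim \eta v/\hat v$, sum the geometric series to lower-bound the total forced digits, and cover at scale $Q_i\approx(1+v)P_i$. The paper bounds the number of record blocks by $O(\log N)$ (via exponential growth of $a_{i_k}$) rather than your $O(\sqrt{N})$ (via strict monotonicity of the $S_j$), but either gives a subexponential combinatorial overhead, so both work.

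One point to fix: your sentence ``the set in question is the \emph{countable} union over $v\in[\theta\hat v,\theta\hat v+\rho]$'' is false as written --- that union is uncountable, so you cannot simply take $\sup_v g(v)$. The paper handles the $\rho$-version differently: it runs the \emph{same} covering argument but feeds in the upper bound $\theta\hat v+\rho$ for the growth ratio and the lower bound $\theta\hat v$ for the cover scale $m_k$, which directly produces the mixed fraction
\[
\frac{\eta v_2-\hat v(1+v_2)}{(1+v_1)(\eta v_2-\hat v)}\qquad(v_1=\theta\hat v,\ v_2=\theta\hat v+\rho),
\]
i.e.\ exactly the stated bound. Your inequality $\sup_{[v_1,v_2]}g\le$ this mixed fraction is in fact true (write the fraction as above and compare factorwise), so your route can be salvaged by first proving the $\rho$-bound for \emph{small} $\rho$ via the direct covering and then slicing $[\theta\hat v,\theta\hat v+\rho]$ into countably many rational sub-intervals --- but this is circular unless you do the direct covering at least once. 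Adjust that step and the rest goes through.
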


\begin{remark}
In fact, the conclusion $\dim_{\rm H}(\hat{\mathcal{V}}_{b,A}(\eta))=0$ already follows from $$\dim_{\rm H}(\hat{\mathcal{V}}_{b,A}(\hat{v}))\leq\left(\frac{\eta-\hat{v}}{\eta+\hat{v}}\right)^{2}.$$ For completeness, we still state it here.
\end{remark}

\begin{theorem}\label{15}
Let $b\geq2$ be an integer. If $\eta=1$, then for any $\hat{v}\in(0,1)$ and $\theta\geq\frac{1}{1-\hat{v}}$, we have
\begin{equation*}
\dim_{\rm H}(\hat{\mathcal{V}}_{b,A}^{\ast}(\hat{v})\cap\mathcal{V}_{b,A}^{\ast}(\theta\hat{v}))=\dim_{\rm H}(\hat{\mathcal{V}}_{b,A}(\hat{v})\cap\mathcal{V}_{b,A}^{\ast}(\theta\hat{v}))=\frac{\theta-1-\theta\hat{v}}{(\theta-1)(1+\theta\hat{v})}.
\end{equation*}
\end{theorem}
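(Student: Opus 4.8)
Since $\hat{\mathcal{V}}_{b,A}^{\ast}(\hat v)\subseteq\hat{\mathcal{V}}_{b,A}(\hat v)$ and $\max\{1,\frac{1}{\eta-\hat v}\}=\frac{1}{1-\hat v}$ when $\eta=1$, Theorem~\ref{14} already gives
\begin{equation*}
\dim_{\rm H}\bigl(\hat{\mathcal{V}}_{b,A}^{\ast}(\hat v)\cap\mathcal{V}_{b,A}^{\ast}(\theta\hat v)\bigr)\leq\dim_{\rm H}\bigl(\hat{\mathcal{V}}_{b,A}(\hat v)\cap\mathcal{V}_{b,A}^{\ast}(\theta\hat v)\bigr)\leq\frac{\theta-1-\theta\hat v}{(\theta-1)(1+\theta\hat v)}=:s.
\end{equation*}
So the whole problem is to exhibit a set $E\subseteq\hat{\mathcal{V}}_{b,A}^{\ast}(\hat v)\cap\mathcal{V}_{b,A}^{\ast}(\theta\hat v)$ with $\dim_{\rm H}E\geq s$. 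The plan is to build $E$ as a Cantor-type set of $b$-ary expansions; the key structural fact is that $\eta=1$ forces $a_{n+1}/a_n\to1$, so that the restricted set $A$ behaves like $\mathbb{N}$ up to multiplicatively negligible perturbations.

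\emph{The construction.} Put $v=\theta\hat v$, and recall that, writing $\xi=0.\varepsilon_1\varepsilon_2\cdots$ in base $b$, the quantity $\|b^{a_n}\xi\|$ is comparable to $b^{-L}$ exactly when $\varepsilon_{a_n+1},\dots,\varepsilon_{a_n+L}$ are all $0$ (or all $b-1$) and the next digit differs. I form $\xi$ as a concatenation $0.\,B_1\,Z_1\,B_2\,Z_2\cdots$, where $Z_i=0^{l_i}$ is a run of $l_i$ zeros and $B_i$ is a block of $h_i$ ``free'' digits; write $p_i$ for the position where $Z_i$ starts, so $p_{i+1}=p_i+l_i+h_{i+1}$. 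The recursion is $l_i=\lfloor v\,p_i\rfloor$ and $p_{i+1}=\lceil l_i/\hat v\rceil$, which makes $p_i$ grow geometrically with ratio $\theta$ and $h_{i+1}\sim(\theta-1-v)p_i$, a positive quantity exactly because $\theta>\frac{1}{1-\hat v}$ (in the boundary case $\theta=\frac{1}{1-\hat v}$, where $s=0$, one instead takes any $h_i\to\infty$ with $h_i=o(p_i)$ and only has to see that $E\neq\emptyset$). As $a_{n+1}/a_n\to1$, each $p_i$ can be nudged by $o(p_i)$ so that $p_i=a_{n_i}$ for a suitable index $n_i$, without affecting the asymptotics. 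Finally let $E$ consist of all $\xi$ of this shape subject to two mild restrictions on the free digits: the last digit of each $B_i$ is nonzero, and no run of $0$'s or of $(b-1)$'s of length $\geq k_i$ occurs inside $B_i$, where $k_i\to\infty$ with $k_i=o(p_i)$. Both restrictions are of negligible cost for the dimension, and every member of $E$ is irrational.

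\emph{Why $E$ lies in the target set.} Fix $\xi\in E$. Since $Z_i$ begins at position $p_i=a_{n_i}$, we have $\|b^{a_{n_i}}\xi\|\leq b^{-l_i}$, and $l_i/p_i\to v$ gives $v_{b,A}(\xi)\geq v$; conversely, an exponent exceeding $v$ at infinitely many $n$ would demand arbitrarily long zero or $(b-1)$ runs, but each such run either lies in some $B_i$ (length $<k_i=o(p_i)$, so exponent $\to0$) or meets some $Z_j$ from a position $\geq p_j$ (length $\leq l_j\sim v\,a_n$), so $v_{b,A}(\xi)=\theta\hat v$. For the uniform exponent: given large $N$, choose $i$ with $p_i\leq a_N<p_{i+1}$; then $Z_i$ is usable from $n=n_i\leq N$ and $l_i/a_N\geq l_i/p_{i+1}\sim\hat v$, whence $\hat v_{b,A}(\xi)\geq\hat v$. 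For the reverse inequality take $N=n_{i+1}-1$, so $a_N=a_{n_{i+1}-1}\sim p_{i+1}$ --- \emph{this step uses $\eta=1$ decisively}. Only the blocks $Z_1,\dots,Z_i$ are usable from positions $n\leq N$ ($Z_{i+1}$ needs a position $\geq p_{i+1}>a_N$, and no $Z_j$ can be entered from the left since $B_j$ ends in a nonzero digit), and the best of them, $Z_i$, yields $\min_{n\leq N}\|b^{a_n}\xi\|\geq b^{-l_i-1}$ with $l_i\sim\hat v\,a_N$; hence no exponent above $\hat v$ is admissible at these $N$, and $\hat v_{b,A}(\xi)=\hat v$. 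Therefore $E\subseteq\hat{\mathcal{V}}_{b,A}^{\ast}(\hat v)\cap\mathcal{V}_{b,A}^{\ast}(\theta\hat v)$.

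\emph{The dimension, and the main difficulty.} Let $\mathcal{F}=\bigcup_iB_i$ be the set of free positions and let $\mu$ be the natural probability measure on $E$ that spreads mass uniformly over admissible digit choices; then every admissible level-$m$ cylinder has $\mu$-mass at most $b^{-\#(\mathcal{F}\cap[1,m])+o(m)}$. A direct computation identifies
\begin{equation*}
\liminf_{m\to\infty}\frac{\#(\mathcal{F}\cap[1,m])}{m}=\lim_{i\to\infty}\frac{h_1+\cdots+h_i}{p_i+l_i}=\frac{\theta-1-\theta\hat v}{(\theta-1)(1+\theta\hat v)}=s,
\end{equation*}
the infimum over $m$ being attained at the right ends $m=p_i+l_i$ of the zero blocks, where free positions are rarest. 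Hence, for every $\varepsilon>0$ and all small $r$, choosing $m$ with $b^{-m}\leq r<b^{-(m-1)}$ one sees that $B(\xi,r)$ meets only $O_b(1)$ level-$m$ cylinders, so $\mu(B(\xi,r))\leq C_\varepsilon r^{s-\varepsilon}$, and the mass distribution principle gives $\dim_{\rm H}E\geq s-\varepsilon$, hence $\dim_{\rm H}E\geq s$. \textbf{The main obstacle} is exactly this multiscale estimate together with the bookkeeping behind it: while $m$ runs through the interior of a zero block the construction does not branch, so the mass bound must hold uniformly across those stagnant scales, and one must simultaneously control every $o(m)$-error --- from the gaps of $A$, from the floors and ceilings in the recursion, and from the two digit restrictions on the $B_i$ --- all of which are multiplicatively negligible precisely because $\eta=1$. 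Together with the upper bound above, this finishes the proof.
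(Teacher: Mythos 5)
Your proposal is correct and follows essentially the same route as the paper: the upper bound is quoted from Theorem \ref{14}, and the lower bound comes from a Cantor-type set of $b$-ary expansions with zero-blocks of length $\sim\theta\hat{v}\,p_i$ anchored at points of $A$ growing geometrically with ratio $\theta$, equipped with a uniform measure and finished by the mass distribution principle, with $\eta=1$ used exactly where you use it (to identify $a_{n_{i+1}-1}\sim p_{i+1}$). The only difference is cosmetic: the paper suppresses spurious long runs in the free blocks by forcing a bounded number of nonzero digits in arithmetic progression, whereas you forbid runs of length $k_i=o(p_i)$, and both devices are dimension-negligible.
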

In view of Theorems $\ref{12}$ and $\ref{13}$, we can obtain the following corollary.
\begin{corollary}\label{16}
Let $b\geq2$ be an integer. If $\lim\limits_{n\to\infty}\frac{a_{n+1}}{a_{n}}$ exists and $$1<\lim_{n\to\infty}\frac{a_{n+1}}{a_{n}}<+\infty,$$ denote
\begin{equation*}
l_{0}=\max\left\{1,\left\lfloor\frac{\log2-\log(\eta-1)}{\log\eta}\right\rfloor+1\right\}.
\end{equation*}
For any
\begin{equation*}
\hat{v}\in\bigcup_{l=l_{0}}^{\infty}\left[\eta-\frac{\eta^{l}+\eta^{l+1}-1}{\eta^{2l}},\eta-\frac{2}{\eta^{l}}\right],
\end{equation*}
we have
\begin{equation*}
\dim_{\rm H}(\hat{\mathcal{V}}_{b,A}^{\ast}(\hat{v}))=\dim_{\rm H}(\hat{\mathcal{V}}_{b,A}(\hat{v}))=\frac{\eta^{\tilde{l}+1}-1-\eta^{\tilde{l}}\hat{v}}{(\eta^{\tilde{l}+1}-1)(1+\eta^{\tilde{l}}\hat{v})},
\end{equation*}
where
\begin{equation*}
\tilde{l}=\tilde{l}(\hat{v}):=\max\left\{1,\left\lfloor\frac{\log(\eta+1)-\log(\eta-\hat{v})}{\log\eta}\right\rfloor\right\}.
\end{equation*}
\end{corollary}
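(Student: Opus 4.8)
The plan is to sandwich both $\dim_{\rm H}(\hat{\mathcal V}_{b,A}(\hat v))$ and $\dim_{\rm H}(\hat{\mathcal V}^{\ast}_{b,A}(\hat v))$ between the upper bound of Theorem~\ref{12} (supplemented by \eqref{19} at the endpoints) and the lower bound of Theorem~\ref{13}, and to check that the two bounds coincide on the prescribed range. Fix $\hat v$ in that range; it lies in a unique one of the half-open intervals $I_l:=\big(\eta-\tfrac{\eta^{l}+\eta^{l+1}-1}{\eta^{2l}},\,\eta-\tfrac{2}{\eta^{l}}\big]$ with $l\ge l_0$ (intersected with $(1,+\infty)$ when $l=l_0$), i.e. $\eta-\hat v\in\big[\tfrac{2}{\eta^{l}},\,\tfrac{\eta^{l}+\eta^{l+1}-1}{\eta^{2l}}\big)$. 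Abbreviate
\[
f_1=\frac{\eta^{l+1}-1-\eta^{l}\hat v}{(\eta^{l+1}-1)(1+\eta^{l}\hat v)},\qquad
f_2=\frac{\eta^{l}-1-\eta^{l-1}\hat v}{\eta^{l-1}\big(\eta(\eta^{l}-1)-\hat v\big)}.
\]
Since $\hat{\mathcal V}^{\ast}_{b,A}(\hat v)\subseteq\hat{\mathcal V}_{b,A}(\hat v)$, it suffices to prove $\dim_{\rm H}(\hat{\mathcal V}^{\ast}_{b,A}(\hat v))\ge f_1$ and $\dim_{\rm H}(\hat{\mathcal V}_{b,A}(\hat v))\le f_1$, and to identify $f_1$ with the value claimed in the corollary.

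First I would pin down the indices by elementary floor estimates. From $\eta-\hat v\ge 2/\eta^{l}$ together with $\tfrac{\eta+1}{2}<\eta$ one gets $\tfrac{\eta+1}{\eta-\hat v}<\eta^{l+1}$, and from $\eta-\hat v<\tfrac{\eta^{l}+\eta^{l+1}-1}{\eta^{2l}}$ one gets $\tfrac{\eta+1}{\eta-\hat v}>\eta^{l}$; hence $\lfloor\log_\eta\tfrac{\eta+1}{\eta-\hat v}\rfloor=l$, and since $l\ge l_0\ge1$ this gives $\tilde l=l$. Consequently Theorem~\ref{13} yields exactly $\dim_{\rm H}(\hat{\mathcal V}^{\ast}_{b,A}(\hat v))\ge f_1$, and it also shows that the value stated in the corollary is $f_1$. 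Similarly, on the interior of $I_l$ one has $\eta^{l-1}<\tfrac{2}{\eta-\hat v}<\eta^{l}$, so $l_1=\lfloor\log_\eta\tfrac{2}{\eta-\hat v}\rfloor+1=l$; moreover $\eta^{l}(\eta-1)>1$ (which follows from $l\ge l_0>\log_\eta\tfrac{2}{\eta-1}$) is precisely the inequality $\tfrac{\eta^{l}+\eta^{l+1}-1}{\eta^{2l}}<\tfrac{2\eta}{\eta^{l}+1}$, so the interior of $I_l$ is contained in the range of Theorem~\ref{12}.

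For the upper bound I would split into two cases. If $\hat v=\eta-2/\eta^{l}$ is the right endpoint of $I_l$, then $\tfrac{\eta-\hat v}{\eta+\hat v}=\tfrac{1}{\eta^{l+1}-1}$, and substituting $\eta^{l}\hat v=\eta^{l+1}-2$ gives $f_1=\tfrac{1}{(\eta^{l+1}-1)^2}=\big(\tfrac{\eta-\hat v}{\eta+\hat v}\big)^2$, so \eqref{19} already gives $\dim_{\rm H}(\hat{\mathcal V}_{b,A}(\hat v))\le f_1$ (this is the content of the remark after Theorem~\ref{13}). If $\hat v$ lies in the interior of $I_l$, then Theorem~\ref{12} (with $l_1=l$) gives $\dim_{\rm H}(\hat{\mathcal V}_{b,A}(\hat v))\le\max\{f_1,f_2\}$, so it remains to show $f_2\le f_1$ on the interior of $I_l$. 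A direct computation shows $f_1=f_2=\tfrac{1}{\eta^{2l+1}-\eta^{l+1}+1}$ at $\hat v_0=\eta-\tfrac{\eta^{l}+\eta^{l+1}-1}{\eta^{2l}}$ — which is exactly why $\hat v_0$ is the left endpoint of $I_l$ — and that, written over a common denominator, $f_1-f_2$ has a numerator which is a quadratic polynomial in $\hat v$ divisible by $\hat v-\hat v_0$; analysing the sign of the remaining linear factor and the positivity of the two denominators on $I_l$ (on part of which $f_2$ may even be negative, which only helps) yields $f_1\ge f_2$. Combining everything gives $\dim_{\rm H}(\hat{\mathcal V}^{\ast}_{b,A}(\hat v))=\dim_{\rm H}(\hat{\mathcal V}_{b,A}(\hat v))=f_1$, the asserted value.

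The main obstacle is this last comparison $f_1\ge f_2$ throughout $I_l$: equality at the left endpoint is forced by the definition of the interval, but one still has to rule out a second sign change of the quadratic numerator of $f_1-f_2$ inside $I_l$ and to track the positivity of the denominators $(\eta^{l+1}-1)(1+\eta^{l}\hat v)$ and $\eta^{l-1}(\eta(\eta^{l}-1)-\hat v)$, treating the regimes $\eta\ge2$ and $1<\eta<2$ separately; a minor additional point is the interaction at the first index $l=l_0$ between the truncation $\hat v>1$ in the statement and the $\max\{1,\cdot\}$ defining $l_0$. Once these elementary estimates on floors and rational functions are carried out, the corollary follows formally from Theorems~\ref{12} and~\ref{13} and~\eqref{19}.
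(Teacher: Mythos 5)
Your proposal is correct and follows essentially the same route as the paper: sandwiching between Theorem \ref{13} (lower bound, after checking $\tilde{l}=l$) and Theorem \ref{12} together with \eqref{19} at the right endpoints (upper bound), then verifying $f_{1}\geq f_{2}$ via the quadratic numerator of $f_{1}-f_{2}$. The one computation you flag as the ``main obstacle'' is resolved in the paper exactly as you anticipate: the quadratic $\eta^{3l}t^{2}-\eta^{l}(\eta^{l}-1)(\eta^{l+1}+\eta^{l}-1)t+(\eta^{l}-1)^{2}(\eta^{l+1}-1)$ factors with roots $t_{1}=\frac{\eta^{l}-1}{\eta^{l}}$ and $t_{2}=\eta-\frac{\eta^{l}+\eta^{l+1}-1}{\eta^{2l}}$, and since $t_{1}<t_{2}<\hat{v}$ (and both denominators are positive on the interval) there is no second sign change.
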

The remainder of this paper is organized as follows. We give the definitions of Hausdorff measure and dimension in $\S\ref{20}$. Theorems $\ref{14}$ and $\ref{15}$ are proved in $\S\ref{30}$. In $\S\ref{40}$, we give the proof of Theorem $\ref{11}$. Theorems $\ref{12}$ and $\ref{13}$ are established in $\S\ref{50}$. $\S\ref{60}$ is dedicated to the proof of Corollary $\ref{16}$. Finally, we give two natural examples of sequence $A$ in $\S\ref{70}$, where we apply Theorems $\ref{12}$ and $\ref{13}$ to get (numerical) lower and upper bounds.

Throughout this paper, we denote by $|I|$ the length of the interval $I$ and we use \# to denote the cardinality of a finite set.

\section{Hausdorff measure and dimension}\label{20}
Below we give a brief introduction to Hausdorff measure and dimension. For further details, see $\cite{KJF}$.

Given $m\in\mathbb{N}$, let $F$ be s subset of Euclidean space $\mathbb{R}^{m}$. For $\delta>0$, we say a countable collection of sets $\{U_{i}\}$ in $\mathbb{R}^{m}$ is a $\delta$-cover of $F$ if $F\subset\bigcup_{i}U_{i}$ and their diameters are not greater than $\delta$. Given a dimension function $f$, that is, an increasing, continuous function:$\mathbb{R}_{>0}\to\mathbb{R}_{>0}$ such that $f(x)\to0$ as $x\to0$, the $f$-dimensional Hausdorff measure of $F$ is
\begin{equation*}
\mathcal{H}^{f}(F)=\lim_{\delta\to0}\mathcal{H}^{f}_{\delta}(F),
\end{equation*}
where
\begin{equation*}
\mathcal{H}^{f}_{\delta}(F)=\inf\left\{\sum_{i}f(|U_{i}|):\{U_{i}\}\ {\rm is\ a}\  \delta{\rm -cover\ of}\ F\right\}
\end{equation*}
and $|U_{i}|$ means the diameter of $U_{i}$.

When $f(x)=x^{s}$ for some $s\geq0$, we write $\mathcal{H}^{f}$ as $\mathcal{H}^{s}$. The Hausdorff dimension of a set $F$ is
\begin{equation*}
\dim_{\rm H}F=\inf\{s:\mathcal{H}^{s}(F)=0\}.
\end{equation*}

\section{Proof of Theorems $\ref{14}$ and $\ref{15}$}\label{30}
In order to prove Theorem $\ref{14}$, we need the following lemma.
\begin{lemma}\label{21}
For each $\xi\in\mathbb{R}\setminus\mathbb{Q}$, we have
\begin{equation*}
v_{b,A}(\xi)=+\infty\ {\rm when}\ \hat{v}_{b,A}(\xi)=\eta
\end{equation*}
and
\begin{equation*}
v_{b,A}(\xi)\geq\frac{\hat{v}_{b,A}(\xi)}{\eta-\hat{v}_{b,A}(\xi)}\ {\rm when}\ \hat{v}_{b,A}(\xi)<\eta.
\end{equation*}
\end{lemma}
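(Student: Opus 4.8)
The plan is to extract, from the hypothesis on $\hat v_{b,A}(\xi)$, an infinite supply of good simultaneous approximations along the sequence $A$, and then to show that between two consecutive ``trigger'' indices the size of $\|b^{a_n}\xi\|$ cannot have increased too much, so that one of these indices is already a good asymptotic approximation. Fix $\xi\in\mathbb{R}\setminus\mathbb{Q}$ and write $\hat v=\hat v_{b,A}(\xi)$. Fix any $\hat v'<\hat v$ (if $\hat v>0$; the case $\hat v=0$ is vacuous since the claimed bound is $0$). By definition of $\hat v_{b,A}$, for every sufficiently large $N$ there is some $n=n(N)$ with $1\le n\le N$ and $\|b^{a_n}\xi\|<(b^{a_N})^{-\hat v'}$.

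First I would define, for large $N$, the index $m(N)$ to be the largest $n\le N$ with $\|b^{a_n}\xi\|<(b^{a_N})^{-\hat v'}$; call an index $N$ a \emph{jump point} if $m(N)=N$, i.e.\ if $N$ itself satisfies $\|b^{a_N}\xi\|<(b^{a_N})^{-\hat v'}$. The key dichotomy is: either there are infinitely many jump points, in which case $v_{b,A}(\xi)\ge\hat v'$ directly and we are done (and $\hat v'$ can be taken arbitrarily close to $\hat v\le\eta$, while $\hat v/(\eta-\hat v)\to\infty$ as $\hat v\to\eta$, handling both clauses); or jump points are bounded, so that for all large $N$ we have $m(N)<N$. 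In the latter case, run the standard argument of Bugeaud--Liao: pick a large $N$, let $n=m(N)<N$, so that $\|b^{a_{n'}}\xi\|\ge(b^{a_N})^{-\hat v'}$ for every $n'$ with $n<n'\le N$ — in particular for $n'=n+1$ (which is $\le N$ since $n<N$). Now apply the same construction with $N$ replaced by $N-1, N-2,\dots$ to locate the relevant indices, but more efficiently: the inequality $\|b^{a_{n+1}}\xi\|\ge (b^{a_N})^{-\hat v'}$ together with $\|b^{a_n}\xi\|<(b^{a_N})^{-\hat v'}$ and the elementary relation between $\|b^{a_{n+1}}\xi\|$ and $\|b^{a_n}\xi\|$ (namely $\|b^{a_{n+1}}\xi\|=\|b^{a_{n+1}-a_n}\cdot b^{a_n}\xi\|\le b^{a_{n+1}-a_n}\|b^{a_n}\xi\|$) forces
\begin{equation*}
(b^{a_N})^{-\hat v'}\le b^{a_{n+1}-a_n}\|b^{a_n}\xi\|,
\end{equation*}
hence $\|b^{a_n}\xi\|\ge b^{-(a_{n+1}-a_n)}(b^{a_N})^{-\hat v'}$, but this is a lower bound, not what we want; so instead I would use the reverse pairing, applying the defining property at scale $N$ and at scale $n$: since $n=m(N)$ is itself large, there is $p=m(n)\le n$ with $\|b^{a_p}\xi\|<(b^{a_n})^{-\hat v'}$, and iterating produces a strictly decreasing chain of trigger indices $N> n_1> n_2>\cdots$ with $\|b^{a_{n_{j}}}\xi\|<(b^{a_{n_{j-1}}})^{-\hat v'}$ and, crucially, $n_{j-1}=n_j$ or the absence of jumps giving $\|b^{a_{n_j+1}}\xi\|\ge (b^{a_{n_{j-1}}})^{-\hat v'}$.

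The heart of the matter, and the step I expect to be the main obstacle, is the bookkeeping that converts this chain into the quantitative bound $v_{b,A}(\xi)\ge \hat v/(\eta-\hat v)$. The mechanism: at a trigger index $n$ with successor failing the inequality at scale $N$, combining $\|b^{a_{n+1}}\xi\|\le b^{a_{n+1}-a_n}\|b^{a_n}\xi\|$ with $\|b^{a_{n+1}}\xi\|\ge(b^{a_N})^{-\hat v'}$ and $a_{n+1}\le (\eta+\varepsilon)a_n$ (valid for large $n$, by definition of $\eta=\limsup a_{n+1}/a_n$) yields $\|b^{a_n}\xi\| \ge b^{a_n-a_{n+1}}(b^{a_N})^{-\hat v'}\ge b^{-(\eta-1+\varepsilon)a_n}(b^{a_N})^{-\hat v'}$; meanwhile at the \emph{same} index we want an upper bound of the form $\|b^{a_n}\xi\|<(b^{a_n})^{-v}$ with $v$ close to $\hat v/(\eta-\hat v)$. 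To get this, one observes that $n=m(N)$ where $N$ is chosen minimally so that $m(N)=n$; then $m(N+1)>n$, equivalently $\|b^{a_n}\xi\|\ge (b^{a_{N+1}})^{-\hat v'}$, while $\|b^{a_n}\xi\|<(b^{a_N})^{-\hat v'}$, so $a_N\hat v' \le -\log_b\|b^{a_n}\xi\| < a_{N+1}\hat v'$, and combined with $\|b^{a_n}\xi\|\le b^{a_{n+1}-a_n}\cdot\|b^{a_{n+1}}\xi\|$... The cleanest route, which I would ultimately follow, is: for a trigger index $n$ at scale $N$ with $n<N$, we have simultaneously $\|b^{a_n}\xi\|< (b^{a_N})^{-\hat v'}$ and $\|b^{a_{n+1}}\xi\| \ge (b^{a_N})^{-\hat v'}$; then
\begin{equation*}
(b^{a_N})^{-\hat v'} \le \|b^{a_{n+1}}\xi\| \le b^{a_{n+1}-a_n}\|b^{a_n}\xi\| < b^{a_{n+1}-a_n}(b^{a_N})^{-\hat v'},
\end{equation*}
which is consistent but not yet decisive; the decisive extra input is that $n+1$ may be pushed down to the next trigger, giving a \emph{self-improving} recursion on the exponent that, taking $\varepsilon\to 0$ and $\hat v'\to\hat v$ and passing to the limit over the infinite chain, converges precisely to the fixed point $v$ of $v=\hat v+(\text{something})$, which solves to $v=\hat v/(\eta-\hat v)$. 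I would organize this as: (i) set up the chain of trigger indices; (ii) write the recursive inequality relating the approximation exponent at $n_j$ to that at $n_{j-1}$, using $a_{n_{j-1}}\le(\eta+\varepsilon)^{\,n_{j-1}-n_j}a_{n_j}$ only through the ratio bound on consecutive terms, i.e.\ $a_{k+1}/a_k\le\eta+\varepsilon$; (iii) solve the recursion / pass to the limit to conclude $v_{b,A}(\xi)\ge \hat v'/(\eta+\varepsilon-\hat v')$ for all admissible $\hat v',\varepsilon$, hence $v_{b,A}(\xi)\ge\hat v/(\eta-\hat v)$; and (iv) observe that when $\hat v=\eta$ the same estimate reads $v_{b,A}(\xi)\ge \hat v'/(\eta-\hat v')\to\infty$, so $v_{b,A}(\xi)=+\infty$. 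The only genuinely delicate point is making the recursion in (ii)--(iii) honest — ensuring the error terms from replacing $\limsup$ by a finite ratio bound and from $\hat v'<\hat v$ are uniform along the (infinite) chain — which is handled by first fixing $\varepsilon$ and $\hat v'$, choosing $N_0$ so large that $a_{k+1}/a_k<\eta+\varepsilon$ for $k\ge$ (index corresponding to) $N_0$ and the defining property of $\hat v'$ holds for all $N\ge N_0$, and only then running the chain inside $[N_0,\infty)$.
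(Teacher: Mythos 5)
There is a genuine gap --- in fact two. First, the opening dichotomy does not close its first branch: if there are infinitely many jump points you obtain only $v_{b,A}(\xi)\ge\hat v'$, hence $v_{b,A}(\xi)\ge\hat v_{b,A}(\xi)$ after letting $\hat v'\to\hat v$. That is strictly weaker than the claimed bound $\hat v/(\eta-\hat v)$ whenever $\hat v>\eta-1$ (so always when $\eta=1$ and $\hat v>0$), and when $\hat v=\eta$ it gives only $v_{b,A}(\xi)\ge\eta$, not $v_{b,A}(\xi)=+\infty$. Your parenthetical remark conflates the limiting behaviour of the target formula $\hat v'/(\eta-\hat v')$ with what the argument in that branch actually produces, which is $\hat v'$. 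Second, in the other branch the quantitative core is missing: the ``self-improving recursion'' whose fixed point is supposed to be $\hat v/(\eta-\hat v)$ is never written down, and the only inequality you actually derive, $(b^{a_N})^{-\hat v'}\le b^{a_{n+1}-a_n}\|b^{a_n}\xi\|$, is (as you yourself note) a lower bound on $\|b^{a_n}\xi\|$; it cannot produce the upper bounds $\|b^{a_n}\xi\|<(b^{a_n})^{-v}$ at infinitely many $n$ that are needed to bound $v_{b,A}(\xi)$ from below.

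The missing ingredient is a persistence/non-overlap statement: if $\|b^{a_n}\xi\|$ is very small, then for every $n'$ with $b^{a_{n'}-a_n}\|b^{a_n}\xi\|<\tfrac{1}{2}$ one has the exact equality $\|b^{a_{n'}}\xi\|=b^{a_{n'}-a_n}\|b^{a_n}\xi\|$, so no such $n'$ can carry a better approximation than the one inherited from $n$; consequently the indices at which $-\log_b\|b^{a_n}\xi\|$ sets a new record are separated, the $(k+1)$-st record position $a_{i_{k+1}}$ lying beyond the end $m_k\approx a_{i_k}-\log_b\|b^{a_{i_k}}\xi\|$ of the $k$-th good block. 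The paper encodes exactly this via the $b$-ary expansion (maximal blocks of $0$'s or $(b-1)$'s starting at position $a_{i_k}+1$ and ending at $m_k$, with $a_{i_{k+1}}\ge m_k-1$), writes $v_{b,A}(\xi)=\limsup_k(m_k-a_{i_k})/a_{i_k}$ and $\hat v_{b,A}(\xi)\le\eta\liminf_k(m_k-a_{i_k})/m_k$, and concludes from $\limsup_k(a_{i_k}/m_k)\cdot\limsup_k(m_k/a_{i_k})\ge1$ that $(1-\eta^{-1}\hat v_{b,A}(\xi))(v_{b,A}(\xi)+1)\ge1$, which is the stated inequality. Until you prove this separation of records (or an equivalent), the recursion in your steps (ii)--(iii) has no content, and the argument does not reach the exponent $\hat v/(\eta-\hat v)$.
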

\begin{proof}
We can assume that $\hat{v}_{b,A}(\xi)>0$, otherwise, the above inequality is trivial. Let
\begin{equation*}
\xi:=\lfloor\xi\rfloor+\sum_{j=1}^{\infty}\frac{x_{j}}{b^{j}}
\end{equation*}
denote the $b$-ary expansion of $\xi$. It is understood that the digits $x_{1},x_{2},\ldots$ all belong to the set $\{0,1,\ldots,b-1\}$. Let
\begin{equation*}
J:=\{j\in\mathbb{N}:x_{a_{j}+1}\in\{0,b-1\}\}.
\end{equation*}
Since $v_{b,A}(\xi)\geq\hat{v}_{b,A}(\xi)>0$, we know that $J$ is an infinite set. So we can denote $J=(j_{k})_{k=1}^{\infty}$, where $j_{1}<j_{2}< \ldots$. For each $j\in J$, because $\xi$ is an irrational real number, there exists $m_{j}'\geq a_{j}+2$, such that either
\begin{equation*}
x_{a_{j}+1}=\ldots=x_{m_{j}'-1}=0,x_{m_{j}'}>0
\end{equation*}
or
\begin{equation*}
x_{a_{j}+1}=\ldots=x_{m_{j}'-1}=b-1,x_{m_{j}'}<b-1.
\end{equation*}
Since $v_{b,A}(\xi)$ is positive, we obtain that
\begin{equation}\label{22}
\limsup_{j\in J,j\to\infty}(m_{j}'-a_{j})=+\infty.
\end{equation}
Now, we take the maximal subsequences $(m_{i_{k}}')_{k=1}^{\infty}$ and $(a_{i_{k}})_{k=1}^{\infty}$ of $(m_{j}')_{j\in J}$ and $(a_{j})_{j\in J}$, respectively, in such a way that the sequence $(m_{i_{k}}'-a_{i_{k}})_{k=1}^{\infty}$ is strictly increasing. More exactly, take $i_{1}=j_{1}$, for any $k\geq1$ such that $i_{k}$ have been defined, let
\begin{equation*}
i_{k+1}:=\min\{j\in J: j>i_{k}\ {\rm and}\ m_{j}'-a_{j}>m_{i_{k}}'-a_{i_{k}}\}.
\end{equation*}
In view of ($\ref{22}$), the sequence $(i_{k})_{k=1}^{\infty}$ is well defined. Furthermore, $m_{i_{k}}'-a_{i_{k}}$ tends to infinity as $k$ tends to infinity. For simplicity, we denote $m_{i_{k}}'$ by $m_{k}$. Note that
\begin{equation*}
b^{-(m_{j}'-a_{j})}<\|b^{a_{j}}\xi\|<b^{-(m_{j}'-a_{j})+1}.
\end{equation*}
By the construction, we have
\begin{equation}\label{23}
v_{b,A}(\xi)=\limsup_{j\in J,j\to\infty}\frac{-\log\|b^{a_{j}}\xi\|}{a_{j}\log b}=\limsup_{j\in J,j\to\infty}\frac{m_{j}'-a_{j}}{a_{j}}
=\limsup_{k\to\infty}\frac{m_{k}-a_{i_{k}}}{a_{i_{k}}}
\end{equation}
and
\begin{equation}\label{24}
\begin{aligned}
&\quad\hat{v}_{b,A}(\xi)=\liminf_{k\to\infty}\frac{-\log\|b^{a_{i_{k}}}\xi\|}{a_{i_{k+1}-1}\log b}\\&=\liminf_{k\to\infty}\frac{m_{k}-a_{i_{k}}}{a_{i_{k+1}-1}}\leq\eta\cdot\liminf_{k\to\infty}\frac{m_{k}-a_{i_{k}}}{a_{i_{k+1}}}\\
&\leq\eta\cdot\liminf_{k\to\infty}\frac{m_{k}-a_{i_{k}}}{m_{k}}=\eta\cdot\left(1-\limsup_{k\to\infty}\frac{a_{i_{k}}}{m_{k}}\right).
\end{aligned}
\end{equation}
The first inequality of ($\ref{24}$) is due to
\begin{equation*}
\limsup_{k\to\infty}\frac{a_{i_{k+1}}}{a_{i_{k+1}-1}}\leq\eta
\end{equation*}
and the second inequality of ($\ref{24}$) follows from $a_{i_{k+1}}\geq m_{k}-1$. When $\hat{v}_{b,A}(\xi)=\eta$, by ($\ref{24}$), we have
\begin{equation*}
\limsup_{k\to\infty}\frac{a_{i_{k}}}{m_{k}}=0.
\end{equation*}
It then follows from ($\ref{23}$) that
\begin{equation*}
v_{b,A}(\xi)=+\infty.
\end{equation*}
When $\hat{v}_{b,A}(\xi)<\eta$, we consider two cases. If $v_{b,A}(\xi)=+\infty$, obviously,
\begin{equation*}
v_{b,A}(\xi)\geq\frac{\hat{v}_{b,A}(\xi)}{\eta-\hat{v}_{b,A}(\xi)}.
\end{equation*}
If $v_{b,A}(\xi)<+\infty$, by ($\ref{23}$), we have
\begin{equation*}
\limsup_{k\to\infty}\frac{m_{k}}{a_{i_{k}}}<+\infty.
\end{equation*}
Therefore,
\begin{equation*}
\limsup_{k\to\infty}\frac{a_{i_{k}}}{m_{k}}\geq\liminf_{k\to\infty}\frac{a_{i_{k}}}{m_{k}}>0.
\end{equation*}
Thus,
\begin{equation*}
\limsup_{k\to\infty}\frac{a_{i_{k}}}{m_{k}}\cdot\limsup_{k\to\infty}\frac{m_{k}}{a_{i_{k}}}\geq1.
\end{equation*}
In view of ($\ref{23}$) and ($\ref{24}$), we obtain that
\begin{equation*}
(1-\eta^{-1}\hat{v}_{b,A}(\xi))(v_{b,A}(\xi)+1)\geq1.
\end{equation*}
Hence,
\begin{equation*}
v_{b,A}(\xi)\geq\frac{\hat{v}_{b,A}(\xi)}{\eta-\hat{v}_{b,A}(\xi)}.
\end{equation*}
\textrm{}
\end{proof}
Now, we begin the proof the Theorem $\ref{14}$.
\begin{proof}[Proof of Theorem $\ref{14}$]
If $\theta<1$, then for any $\xi\in\hat{\mathcal{V}}_{b,A}(\hat{v})$, we have
\begin{equation*}
v_{b,A}(\xi)\geq\hat{v}_{b,A}(\xi)\geq\hat{v}>\theta\hat{v}.
\end{equation*}
Thus, $\xi\notin\mathcal{V}_{b,A}^{\ast}(\theta\hat{v})$, which implies that
\begin{equation*}
\hat{\mathcal{V}}_{b,A}(\hat{v})\cap\mathcal{V}_{b,A}^{\ast}(\theta\hat{v})=\emptyset.
\end{equation*}
If $\theta<\frac{1}{\eta-\hat{v}}$, then for all $\xi\in\mathcal{V}_{b,A}^{\ast}(\theta\hat{v})$, we have $\xi\in\mathbb{R}\setminus\mathbb{Q}$ and
\begin{equation}\label{25}
v_{b,A}(\xi)=\theta\hat{v}<\frac{\hat{v}}{\eta-\hat{v}}.
\end{equation}
In view of Lemma $\ref{21}$, we have $\hat{v}_{b,A}(\xi)<\eta$ and
\begin{equation}\label{26}
v_{b,A}(\xi)\geq\frac{\hat{v}_{b,A}(\xi)}{\eta-\hat{v}_{b,A}(\xi)}.
\end{equation}
By ($\ref{25}$) and ($\ref{26}$), we obtain that $\hat{v}_{b,A}(\xi)<\hat{v}$. Hence,
\begin{equation*}
\hat{\mathcal{V}}_{b,A}(\hat{v})\cap\mathcal{V}_{b,A}^{\ast}(\theta\hat{v})=\emptyset.
\end{equation*}
Now let $\theta\geq\max\left\{1,\frac{1}{\eta-\hat{v}}\right\}$. We can assume that $\hat{\mathcal{V}}_{b,A}(\hat{v})\cap\mathcal{V}_{b,A}^{\ast}(\theta\hat{v})\neq\emptyset$ as otherwise the inequality is trivial. For each $\xi\in\hat{\mathcal{V}}_{b,A}(\hat{v})\cap\mathcal{V}_{b,A}^{\ast}(\theta\hat{v})$, we have
\begin{equation}\label{27}
\hat{v}_{b,A}(\xi)\geq\hat{v}\ {\rm and}\ v_{b,A}(\xi)=\theta\hat{v}.
\end{equation}
Therefore, $\xi\in\mathbb{R}\setminus\mathbb{Q}$. Let
\begin{equation*}
\xi:=\lfloor\xi\rfloor+\sum_{j=1}^{\infty}\frac{x_{j}}{b^{j}}
\end{equation*}
denote the $b$-ary expansion of $\xi$. Denote by
\begin{equation*}
J:=\{j\in\mathbb{N}:x_{a_{j}+1}\in\{0,b-1\}\}.
\end{equation*}
By the proof of Lemma $\ref{21}$, we know that there exists subsequences $(m_{k})_{k=1}^{\infty}$ and $(a_{i_{k}})_{k=1}^{\infty}$ of $(m_{j}')_{j\in J}$ and $(a_{j})_{j\in J}$, respectively, such that for any $k\in\mathbb{N}$, we have either
\begin{equation*}
x_{a_{i_{k}}+1}=\ldots=x_{m_{k}-1}=0,x_{m_{k}}>0
\end{equation*}
or
\begin{equation*}
x_{a_{i_{k}}+1}=\ldots=x_{m_{k}-1}=b-1,x_{m_{k}}<b-1.
\end{equation*}
Furthermore, we have $m_{k+1}-a_{i_{k+1}}>m_{k}-a_{i_{k}}$ for every $k\in\mathbb{N}$. What is more,
\begin{equation}\label{28}
v_{b,A}(\xi)=\limsup_{k\to\infty}\frac{m_{k}-a_{i_{k}}}{a_{i_{k}}}
\end{equation}
and
\begin{equation}\label{29}
\hat{v}_{b,A}(\xi)=\liminf_{k\to\infty}\frac{m_{k}-a_{i_{k}}}{a_{i_{k+1}-1}}\leq\eta\cdot\liminf_{k\to\infty}\frac{m_{k}-a_{i_{k}}}{a_{i_{k+1}}}.
\end{equation}
Take a subsequence $(k_{j})_{j=1}^{\infty}$ along which the supremum of ($\ref{28}$) is obtained. For simplicity, we still write $(m_{k})_{k=1}^{\infty}$, $(a_{i_{k}})_{k=1}^{\infty}$ for the subsequence $(m_{k_{j}})_{j=1}^{\infty}$ and $(a_{i_{k_{j}}})_{j=1}^{\infty}$. In view of ($\ref{27}$), ($\ref{28}$) and ($\ref{29}$), we have
\begin{equation}\label{211}
\lim_{k\to\infty}\frac{m_{k}-a_{i_{k}}}{a_{i_{k}}}=\theta\hat{v}
\end{equation}
and
\begin{equation}\label{212}
\liminf_{k\to\infty}\frac{m_{k}-a_{i_{k}}}{a_{i_{k+1}}}\geq\eta^{-1}\hat{v}.
\end{equation}
For any $\varepsilon\in(0,\frac{1}{2}\eta^{-1}\hat{v})$, due to ($\ref{211}$), ($\ref{212}$) and $\lim_{n\to\infty}a_{n}=\infty$, there exists $K_{1}\in\mathbb{N}$, such that for each $k\geq K_{1}$, we have
\begin{equation}\label{213}
\theta\hat{v}-\frac{1}{2}\varepsilon<\frac{m_{k}-a_{i_{k}}}{a_{i_{k}}}<\theta\hat{v}+\frac{1}{2}\varepsilon,
\end{equation}
\begin{equation}\label{214}
\frac{m_{k}-a_{i_{k}}}{a_{i_{k+1}}}>\eta^{-1}\hat{v}-\frac{1}{2}\varepsilon
\end{equation}
and
\begin{equation}\label{215}
a_{i_{k+1}}\geq\frac{2}{\varepsilon}.
\end{equation}
Combining the second inequality of ($\ref{213}$), ($\ref{214}$) and ($\ref{215}$), we have
\begin{equation*}
m_{k}-a_{i_{k}}-1>\frac{\eta^{-1}\hat{v}-\varepsilon}{\theta\hat{v}+\frac{1}{2}\varepsilon}\cdot (m_{k+1}-a_{i_{k+1}}-1).
\end{equation*}
Since $\theta\geq\max\left\{1,\frac{1}{\eta-\hat{v}}\right\}>\eta^{-1}$ and $\varepsilon\in(0,\frac{1}{2}\eta^{-1}\hat{v})$, we have
\begin{equation}\label{224}
0<\frac{\eta^{-1}\hat{v}-\varepsilon}{\theta\hat{v}+\frac{1}{2}\varepsilon}<1.
\end{equation}
So we can fix $K_{2}>K_{1}$ satisfying
\begin{equation}\label{225}
\left(\frac{\eta^{-1}\hat{v}-\varepsilon}{\theta\hat{v}+\frac{1}{2}\varepsilon}\right)^{K_{2}-K_{1}}<\varepsilon.
\end{equation}
For all $k\geq K_{2}$, the sum of all the lengths of the blocks of 0 or $b-1$ in the prefix of length $a_{i_{k}}$ of the infinite sequence $x_{1}x_{2}\ldots$ is, at least equal to
\begin{equation}\label{223}
\begin{aligned}
\sum_{l=K_{1}}^{k-1}(m_{l}-a_{i_{l}}-1)&\geq\sum_{l=K_{1}}^{k-1}(m_{k-1}-a_{i_{k-1}}-1)\cdot\left(\frac{\eta^{-1}\hat{v}-\varepsilon}{\theta\hat{v}+\frac{1}{2}\varepsilon}\right)^{k-l-1}\\
&>(\eta^{-1}\hat{v}-\varepsilon)a_{i_{k}}\sum_{l=K_{1}}^{k-1}\left(\frac{\eta^{-1}\hat{v}-\varepsilon}{\theta\hat{v}+\frac{1}{2}\varepsilon}\right)^{k-l-1}\\ &=(\eta^{-1}\hat{v}-\varepsilon)a_{i_{k}}\frac{\theta\hat{v}+\frac{1}{2}\varepsilon}{(\theta-\eta^{-1})\hat{v}+\frac{3}{2}\varepsilon}\left(1-\left(\frac{\eta^{-1}\hat{v}-\varepsilon}{\theta\hat{v}+\frac{1}{2}\varepsilon}\right)^{ k-K_{1}}\right) \\
&>a_{i_{k}}\left(\frac{\theta\hat{v}}{\eta\theta-1}-\varepsilon'\right),
\end{aligned}
\end{equation}
where
\begin{equation*}
\varepsilon'=\left(\frac{\theta\hat{v}}{\eta\theta-1}-\frac{(\hat{v}-\eta\varepsilon)(\theta\hat{v}+\frac{1}{2}\varepsilon)}{(\eta\theta-1)\hat{v}+\frac{3}{2}\eta\varepsilon}\right)+\varepsilon\cdot\frac{(\hat{v}-\eta\varepsilon)(\theta\hat{v}+\frac{1}{2}\varepsilon)}{(\eta\theta-1)\hat{v}+\frac{3}{2}\eta\varepsilon}. \end{equation*}
The last inequality of $\eqref{223}$ is due to $\eqref{224}$, $\eqref{225}$ and $k\geq K_{2}$.
Note that $\varepsilon'>0$ and $\varepsilon'$ tends to zero as $\varepsilon$ tends to zero. By the first inequality of ($\ref{213}$), we have
\begin{equation*}
\begin{aligned}
m_{k}>\left(1+\theta\hat{v}-\frac{1}{2}\varepsilon\right)\cdot a_{i_{k}}&\geq\left(1+\theta\hat{v}-\frac{1}{2}\varepsilon\right)\cdot(m_{k-1}-1) \\
&\geq(1+\theta\hat{v}-\varepsilon)\cdot m_{k-1}\\ &>\left(1+\frac{1}{2}\eta^{-1}\hat{v}\right)\cdot m_{k-1}
\end{aligned}
\end{equation*}
for $k$ large enough. The last inequality is due to $\varepsilon\in(0,\frac{1}{2}\eta^{-1}\hat{v})$ and $\theta>\frac{1}{\eta-\hat{v}}$. Thus, $(m_{k})_{k=1}^{\infty}$ increases at least exponentially. Since $a_{i_{k}}\geq m_{k-1}-1$ for $k\geq2$, the sequence $(a_{i_{k}})_{k=1}^{\infty}$ also increases at least exponentially. Consequently, there exists $K_{3}\in\mathbb{N}$, for all $k\geq K_{3}$, we have
\begin{equation*}
k\leq c\cdot\log a_{i_{k}},\ {\rm where}\ c:=\left(\log\left(1+\frac{1}{4}\eta^{-1}\hat{v}\right)\right)^{-1}.
\end{equation*}
Let $K_{4}=\max\{K_{2},K_{3}\}$. For each $k\geq K_{4}$, we have
\begin{equation}\label{216}
k\leq c\cdot\log a_{i_{k}}\ {\rm and}\ \sum_{l=K_{1}}^{k-1}(m_{l}-a_{i_{l}}-1)>a_{i_{k}}\left(\frac{\theta\hat{v}}{\eta\theta-1}-\varepsilon'\right).
\end{equation}
For any $\delta>0$, let us construct a $\delta-$cover of $\hat{\mathcal{V}}_{b,A}(\hat{v})\cap\mathcal{V}_{b,A}^{\ast}(\theta\hat{v})\cap[0,1)$. Denote
\begin{equation*}
N_{\delta,\varepsilon}:=\max\left\{\left\lfloor\frac{\log\delta^{-1}}{\log b}\right\rfloor,\left\lfloor\frac{1}{\theta\hat{v}-\frac{1}{2}\varepsilon}\right\rfloor\right\}+1.
\end{equation*}
For any $n\in\mathbb{N}$ and $x_{1},\ldots,x_{n}$ in $\{0,1,\ldots,b-1\}$, denote by $I_{n}(x_{1},\ldots,x_{n})$ the interval consist of the real numbers in $[0,1)$ whose $b$-ary expansion starts with $x_{1}\ldots x_{n}$. For each $N\geq N_{\delta,\varepsilon}$, let $\Gamma_{N,\varepsilon}$ be the set of all $(x_{1},\ldots, x_{\lfloor(1+\theta\hat{v}-\frac{1}{2}\varepsilon)N\rfloor})$ satisfying $x_{N+1}=\ldots=x_{\lfloor(1+\theta\hat{v}-\frac{1}{2}\varepsilon)N\rfloor}=0$ or $b-1$ and there exists $n$ blocks of 0 or $b-1$ of $x_{1}\ldots x_{N}$, with $n\leq c\log N$ and the total length of these $n$ blocks is at least equal to $N\left(\frac{\theta\hat{v}}{\eta\theta-1}-\varepsilon'\right)$. For $L\geq N_{\delta,\varepsilon}$, denote
\begin{equation*}
A_{\delta,\varepsilon,L}:=\bigcup_{N=L}^{\infty}\bigcup_{(x_{1},\ldots, x_{\lfloor(1+\theta\hat{v}-\frac{1}{2}\varepsilon)N\rfloor}) \in\Gamma_{N,\varepsilon}}I_{\lfloor(1+\theta\hat{v}-\frac{1}{2}\varepsilon)N\rfloor}( x_{1}, \ldots,x_{\lfloor(1+\theta\hat{v}-\frac{1}{2}\varepsilon)N\rfloor}).
\end{equation*}
For any $\xi\in\hat{\mathcal{V}}_{b,A}(\hat{v})\cap\mathcal{V}_{b,A}^{\ast}(\theta\hat{v})\cap[0,1)$, for all $k\geq K_{4}$ and $a_{i_{k}}\geq L$, in view of ($\ref{216}$), we know that
\begin{equation*}
k\leq c\cdot\log a_{i_{k}}\ {\rm and}\ \sum_{l=K_{1}}^{k-1}(m_{l}-a_{i_{l}}-1)>a_{i_{k}}\left(\frac{\theta\hat{v}}{\eta\theta-1}-\varepsilon'\right).
\end{equation*}
By the definition of $(a_{i_{k}})_{k\geq1}$ and the first inequality of $\eqref{213}$, we have
\begin{equation*}
x_{a_{i_{k}}+1}=\ldots=x_{\lfloor(1+\theta\hat{v}-\frac{1}{2}\varepsilon)a_{i_{k}}\rfloor}=0\ {\rm or}\ b-1.
\end{equation*}
Thus,
\begin{equation*}
\xi\in\bigcup_{(x_{1},\ldots,x_{\lfloor(1+\theta\hat{v}-\frac{1}{2}\varepsilon)a_{i_{k}}\rfloor}) \in\Gamma_{a_{i_{k}},\varepsilon}}I_{\lfloor(1+\theta\hat{v}-\frac{1}{2}\varepsilon)a_{i_{k}}\rfloor}(x_{1},\ldots, x_{\lfloor(1+\theta\hat{v}-\frac{1}{2}\varepsilon)a_{i_{k}}\rfloor})\subset A_{\delta,\varepsilon,L}.
\end{equation*}
Therefore,
\begin{equation*}
\hat{\mathcal{V}}_{b,A}(\hat{v})\cap\mathcal{V}_{b,A}^{\ast}(\theta\hat{v})\cap[0,1)\subset A_{\delta,\varepsilon,L}.
\end{equation*}
On the other hand, for any $N\geq L$, we have
\begin{equation*}
|I_{\lfloor(1+\theta\hat{v}-\frac{1}{2}\varepsilon)N\rfloor}(x_{1},\ldots,x_{\lfloor(1+\theta\hat{v}-\frac{1}{2}\varepsilon)N\rfloor})|\doteq b^{-\lfloor(1+\theta\hat{v}-\frac{1}{2}\varepsilon)N\rfloor}<b^{-N}<\delta.
\end{equation*}
Hence, $A_{\delta,\varepsilon,L}$ is a $\delta-$cover of $\hat{\mathcal{V}}_{b,A}(\hat{v})\cap\mathcal{V}_{b,A}^{\ast}(\theta\hat{v})\cap[0,1)$.
Thus, for any $s\geq0$, we have
\begin{equation*}
\begin{aligned}
&\quad\mathcal{H}_{\delta}^{s}(\hat{\mathcal{V}}_{b,A}(\hat{v})\cap\mathcal{V}_{b,A}^{\ast}(\theta\hat{v})\cap[0,1))\\
&\leq\sum_{N=L}^{\infty}\sum_{(x_{1},\ldots,x_{\lfloor(1+\theta\hat{v}-\frac{1}{2}\varepsilon)N\rfloor}) \in\Gamma_{N,\varepsilon}}b^{-\lfloor(1+\theta\hat{v}-\frac{1}{2}\varepsilon)N\rfloor s}\\ &=\sum_{N= L}^{\infty}b^{-\lfloor(1+\theta\hat{v}-\frac{1}{2}\varepsilon)N\rfloor s}\cdot\sharp\Gamma_{N,\varepsilon}.
\end{aligned}
\end{equation*}
So, we need to estimate $\#\Gamma_{N,\varepsilon}$. Note that for any $n$ with $1\leq n\leq c\log N$ and every $M$ with $\left\lfloor N\left(\frac{\theta\hat{v}}{\eta\theta-1}-\varepsilon'\right)\right\rfloor\leq M\leq N$, there are at most
\begin{equation*}
2^{n}N^{n}\binom{n-1}{M-1}
\end{equation*}
choices of the $n$ blocks of 0 or $b-1$ of $x_{1}\ldots x_{N}$, with the total length of these $n$ blocks are equal to $M$. Hence,
\begin{equation*}
\begin{aligned}
\# \Gamma_{N,\varepsilon}&\leq\sum_{n=1}^{\lfloor c\log N\rfloor}\left(2^{n+1}N^{n}\sum_{M=\left\lfloor N\left(\frac{\theta\hat{v}}{\eta\theta-1}-\varepsilon'\right)\right\rfloor}^{N}\binom{n-1}{M-1}b^{N-M}\right) \\
&\leq2b\cdot(2N^{2})^{c\log N}c\log N b^{N\left(\frac{\eta\theta-1-\theta\hat{v}}{\eta\theta-1}+\varepsilon'\right)}.
\end{aligned}
\end{equation*}
Thus,
\begin{equation*}
\begin{aligned}
&\quad\mathcal{H}_{\delta}^{s}(\hat{\mathcal{V}}_{b,A}(\hat{v})\cap\mathcal{V}_{b,A}^{\ast}(\theta\hat{v})\cap[0,1))\\
&\leq2b^{1+s}\sum_{N=L}^{\infty}(2N^{2})^{c\log N}c\log N\cdot b^{\left(-(1+\theta\hat{v}-\frac{1}{2}\varepsilon)s+\left(\frac{\eta\theta-1-\theta\hat{v}}{\eta\theta-1}+\varepsilon'\right)\right)N}.
\end{aligned}
\end{equation*}
Therefore, for any $s>(1+\theta\hat{v}-\frac{1}{2}\varepsilon)^{-1}\left(\frac{\eta\theta-1-\theta\hat{v}}{\eta\theta-1}+\varepsilon'\right)$, letting $L\to+\infty$, we have
\begin{equation*}
\mathcal{H}_{\delta}^{s}(\hat{\mathcal{V}}_{b,A}(\hat{v})\cap\mathcal{V}_{b,A}^{\ast}(\theta\hat{v})\cap[0,1))=0.
\end{equation*}
Letting $\delta\to0$, we obtain that
\begin{equation*}
\mathcal{H}^{s}(\hat{\mathcal{V}}_{b,A}(\hat{v})\cap\mathcal{V}_{b,A}^{\ast}(\theta\hat{v})\cap[0,1))=0.
\end{equation*}
Hence, $s\geq\dim_{\rm H}(\hat{\mathcal{V}}_{b,A}(\hat{v})\cap\mathcal{V}_{b,A}^{\ast}(\theta\hat{v})\cap[0,1))$. Therefore,
\begin{equation*}
(1+\theta\hat{v}-\frac{1}{2}\varepsilon)^{-1}\left(\frac{\eta\theta-1-\theta\hat{v}}{\eta\theta-1}+\varepsilon'\right)\geq\dim_{\rm H}(\hat{\mathcal{V}}_{b,A}(\hat{v})\cap\mathcal{V}_{b,A}^{\ast}(\theta\hat{v})\cap[0,1)).
\end{equation*}
Letting $\varepsilon\to0$, we obtain that
\begin{equation*}
\dim_{\rm H}(\hat{\mathcal{V}}_{b,A}(\hat{v})\cap\mathcal{V}_{b,A}^{\ast}(\theta\hat{v})\cap[0,1))\leq\frac{\eta\theta-1-\theta\hat{v}}{(1+\theta\hat{v})(\eta\theta-1)}.
\end{equation*}
By the countable stability of Hausdorff dimension and the definition of $\hat{\mathcal{V}}_{b,A}(\hat{v})$, $\mathcal{V}_{b,A}^{\ast}(\theta\hat{v})$, we have
\begin{equation*}
\dim_{\rm H}(\hat{\mathcal{V}}_{b,A}(\hat{v})\cap\mathcal{V}_{b,A}^{\ast}(\theta\hat{v}))=\dim_{\rm H}(\hat{\mathcal{V}}_{b,A}(\hat{v})\cap\mathcal{V}_{b,A}^{\ast}(\theta\hat{v})\cap[0,1))\leq\frac{\eta\theta-1-\theta\hat{v}}{(1+\theta\hat{v})(\eta\theta-1)}.
\end{equation*}
Actually, we have proved that, for every $\theta\geq\max\left\{1,\frac{1}{\eta-\hat{v}}\right\}$ and all $\rho\geq0$, we have
\begin{equation*}
\dim_{\rm H}(\{\xi\in\mathbb{R}:\hat{v}_{b,A}(\xi)\geq\hat{v}\ {\rm and}\ \theta\hat{v}\leq v_{b,A}(\xi)\leq\theta\hat{v}+\rho\})
\leq\frac{\hat{v}(\eta\theta-1-\theta\hat{v})+\rho(\eta-\hat{v})}{(1+\theta\hat{v})((\eta\theta-1)\hat{v}+\eta\rho)}.
\end{equation*}
\textrm{}
\end{proof}
At the end of this section, we give the proof of Theorem $\ref{15}$.
\begin{proof}[Proof of Theorem $\ref{15}$]
By Theorem $\ref{14}$, we only need to show that
\begin{equation}\label{217}
\dim_{\rm H}(\hat{\mathcal{V}}_{b,A}^{\ast}(\hat{v})\cap\mathcal{V}_{b,A}^{\ast}(\theta\hat{v}))\geq\frac{\theta-1-\theta\hat{v}}{(\theta-1)(1+\theta\hat{v})}
\end{equation}
for every $\theta\geq\frac{1}{1-\hat{v}}$. We can suppose that $\theta>\frac{1}{1-\hat{v}}$, otherwise, the above inequality is trivial. We construct a suitable Cantor type subset of $\hat{\mathcal{V}}_{b,A}^{\ast}(\hat{v})\cap\mathcal{V}_{b,A}^{\ast}(\theta\hat{v})$. Fix $i_{1}\in\mathbb{N}$ such that
\begin{equation*}
a_{i_{1}}>\max\{3(\theta\hat{v})^{-1},(\theta-1)^{-1}(\theta\hat{v})^{-1},(\theta-1-\theta\hat{v})^{-1}\}.
\end{equation*}
Let $m_{1}=\lfloor(1+\theta\hat{v})a_{i_{1}}\rfloor$. Let $k\geq1$ be such that $i_{k}$ have been defined. Then we define
\begin{equation*}
m_{k}=\lfloor(1+\theta\hat{v})a_{i_{k}}\rfloor\ {\rm and}\ i_{k+1}=\min\{j>i_{k}:a_{j}>\theta a_{i_{k}}\}.
\end{equation*}
Since $\lim_{n\to\infty}a_{n}=+\infty$, the sequence $(i_{k})_{k\geq1}$ is well defined. By the above construction, we have
\begin{equation*}
a_{i_{k}}+3\leq m_{k}\leq a_{i_{k+1}}-2,
\end{equation*}
\begin{equation*}
 m_{k+1}-a_{i_{k+1}}>m_{k}-a_{i_{k}},
\end{equation*}
\begin{equation}\label{218}
\lim_{k\to\infty}\frac{m_{k}-a_{i_{k}}}{a_{i_{k}}}=\theta\hat{v}
\end{equation}
and
\begin{equation}\label{219}
\lim_{k\to\infty}\frac{m_{k}-a_{i_{k}}}{a_{i_{k+1}}}=\hat{v}.
\end{equation}
It is worth mentioning that the condition $\eta=1$ is used in $\eqref{219}$.

In order to construct a Cantor type subset of $\hat{\mathcal{V}}_{b,A}^{\ast}(\hat{v})\cap\mathcal{V}_{b,A}^{\ast}(\theta\hat{v})$, we consider two cases.

If $b\geq3$, let $E_{\theta,\hat{v}}$ be the set of all real numbers $\xi$ in $(0,1)$ whose $b$-ary expansion $\xi=\sum_{j\geq1}\frac{x_{j}}{b^{j}}$ satisfies, for each $k\geq1$,
\begin{equation*}
x_{a_{i_{k}}}=1,x_{a_{i_{k}}+1}=\ldots=x_{m_{k}-1}=0,x_{m_{k}}=1,
\end{equation*}
and
\begin{equation*}
x_{m_{k}+(m_{k}-a_{i_{k}})}=x_{m_{k}+2(m_{k}-a_{i_{k}})}=\ldots=x_{m_{k}+t_{k}(m_{k}-a_{i_{k}})}=1,
\end{equation*}
where $t_{k}$ is the largest integer such that $m_{k}+t_{k}(m_{k}-a_{i_{k}})<a_{i_{k+1}}$. Since
\begin{equation*}
t_{k}<\frac{a_{i_{k+1}}-m_{k}}{m_{k}-a_{i_{k}}}<\frac{2}{\hat{v}}
\end{equation*}
for all $k$ large enough, the sequence $(t_{k})_{k\geq1}$ is bounded. We check that the maximal length of blocks of zeros of the finite sequence $x_{a_{i_{k}}}x_{a_{i_{k}}+1}\ldots x_{a_{i_{k+1}}-1}$ is equal to $m_{k}-a_{i_{k}}-1$. Since $b-2\geq1$, we have
\begin{equation*}
\|b^{a_{n}}\xi\|>b^{-(m_{k}-a_{i_{k}})}
\end{equation*}
for every $i_{k}\leq n<i_{k+1}$ with $k\geq1$. Thus,
\begin{equation*}
\hat{v}_{b,A}(\xi)=\hat{v}\ {\rm and}\ v_{b,A}(\xi)=\theta\hat{v}.
\end{equation*}

If $b=2$, let $E_{\theta,\hat{v}}$ be the set of all real numbers $\xi$ in $(0,1)$ whose $b$-ary expansion $\xi=\sum_{j\geq1}\frac{x_{j}}{b^{j}}$ satisfies, for each $k\geq1$,
\begin{equation*}
x_{a_{i_{k}}}=1,x_{a_{i_{k}}+1}=\ldots=x_{m_{k}-1}=0,x_{m_{k}}=1,
\end{equation*}
\begin{equation*}
x_{m_{k}+(m_{k}-a_{i_{k}})}=x_{m_{k}+2(m_{k}-a_{i_{k}})}=\ldots=x_{m_{k}+t_{k}(m_{k}-a_{i_{k}})}=1,
\end{equation*}
and
\begin{equation*}
x_{m_{k}+(m_{k}-a_{i_{k}})-1}=x_{m_{k}+2(m_{k}-a_{i_{k}})-1}=\ldots=x_{m_{k}+t_{k}(m_{k}-a_{i_{k}})-1}=0,
\end{equation*}
where $t_{k}$ is the largest integer such that $m_{k}+t_{k}(m_{k}-a_{i_{k}})<a_{i_{k+1}}$. We can check that
\begin{equation*}
\|b^{a_{n}}\xi\|>b^{-(m_{k}-a_{i_{k}}+2)}
\end{equation*}
for every $i_{k}\leq n<i_{k+1}$ with $k\geq1$. Thus,
\begin{equation*}
\hat{v}_{b,A}(\xi)=\hat{v}\ {\rm and}\ v_{b,A}(\xi)=\theta\hat{v}.
\end{equation*}

The remaining proof for the case $b\geq3$ and the case $b=2$ are similar. For simplicity, we assume that $b\geq3$ and leave the details of the proof for case $b=2$ to the reader. We will now estimate the Hausdorff dimension of $E_{\theta,\hat{v}}$ from below.

Let $n\in\mathbb{N}$. For $x_{1},\ldots,x_{n}\in\{0,1,\ldots,b-1\}$, denote by $I_{n}(x_{1},\ldots,x_{n})$ the interval consist of the real numbers in $[0,1)$ whose $b$-ary expansion starts with $x_{1},\ldots,x_{n}$. Define a Bernoulli measure $\mu$ on $E_{\theta,\hat{v}}$ as follows. We distribute the mass uniformly.

For every $n\geq a_{i_{2}}$, there exists $k\geq2$ such that
\begin{equation*}
a_{i_{k}}\leq n\leq a_{i_{k+1}}.
\end{equation*}
If $a_{i_{k}}\leq n\leq m_{k}$, then define
\begin{equation*}
\mu(I_{n}(x_{1},\ldots,x_{n}))=b^{-\left(a_{i_{1}}-1+\sum_{j=1}^{k-1}(a_{i_{j+1}}-m_{j}-t_{j}-1)\right)}.
\end{equation*}
Observe that we have $\mu(I_{a_{i_{k}}})=\ldots=\mu(I_{m_{k}})$. \\
If $m_{k}<n<a_{i_{k+1}}$, then define
\begin{equation*}
\mu(I_{n}(x_{1},\ldots,x_{n}))=b^{-\left(a_{i_{1}}-1+\sum_{j=1}^{k-1}(a_{i_{j+1}}-m_{j}-t_{j}-1)+n-m_{k}-t\right)}
\end{equation*}
where $t$ is the largest integer such that $m_{k}+t(m_{k}-a_{i_{k}})\leq n$. It is routine to check that $\mu$ is well defined on $E_{\theta,\hat{v}}$.

Now we calculate the lower local dimension of $\mu$ at $x\in E_{\theta,\hat{v}}$, that is,
\begin{equation*}
\underline{\dim}_{\rm loc}\mu(x):=\liminf_{r\to 0}\frac{\log\mu(B(x,r))}{\log r},
\end{equation*}
where $B(x,r)$ stands for the ball centered at $x$ with radius $r$. We will prove that
\begin{equation}\label{220}
\underline{\dim}_{\rm loc}\mu(x)=\frac{\theta-1-\theta\hat{v}}{(\theta-1)(\theta\hat{v}+1)}
\end{equation}
for all $x\in E_{\theta,\hat{v}}$.

Firstly, we show that
\begin{equation*}
\liminf_{n\to\infty}\frac{\log\mu(I_{n})}{\log|I_{n}|}=\frac{\theta-1-\theta\hat{v}}{(\theta-1)(\theta\hat{v}+1)},
\end{equation*}
where $I_{n}$ is the $n$th basic intervals. If $n=m_{k}$, then
\begin{equation*}
\liminf_{k\to\infty}\frac{\log\mu(I_{m_{k}})}{\log|I_{m_{k}}|}=\liminf_{k\to\infty}\frac{a_{i_{1}}-1+\sum_{j=1}^{k-1}(a_{i_{j+1}}-m_{j}-t_{j}-1)}{m_{k}}.
\end{equation*}
Recalling that $(t_{k})_{k\geq1}$ is bounded and that $(m_{k})_{k\geq1}$ grows exponentially fast in terms of $k$, we have
\begin{equation*}
\liminf_{k\to\infty}\frac{\log\mu(I_{m_{k}})}{\log|I_{m_{k}}|}=\liminf_{k\to\infty}\frac{\sum_{j=1}^{k-1}(a_{i_{j+1}}-m_{j})}{m_{k}}.
\end{equation*}
It follows from ($\ref{218}$) and ($\ref{219}$) that
\begin{equation*}
\lim_{k\to\infty}\frac{m_{k}}{a_{i_{k}}}=1+\theta\hat{v},\ \lim_{k\to\infty}\frac{m_{k+1}}{m_{k}}=\theta\ {\rm and}\ \lim_{k\to\infty}\frac{a_{i_{k+1}}}{m_{k}}=\frac{\theta}{1+\theta\hat{v}}.
\end{equation*}
Hence, by the Stolz-Ces\`{a}ro theorem,
\begin{equation*}
\lim_{k\to\infty}\frac{\sum_{j=1}^{k-1}(a_{i_{j+1}}-m_{j})}{m_{k}}=\lim_{k\to\infty}\frac{a_{i_{k+1}}-m_{k}}{m_{k+1}-m_{k}}=\lim_{k\to\infty}\frac{\frac{a_{i_{k+1}}}{m_{k}}-1}{\frac{m_{k+1}}{m_{k}}-1}= \frac{\theta-1-\theta\hat{v}}{(\theta-1)(\theta\hat{v}+1)}.
\end{equation*}
Therefore,
\begin{equation*}
\liminf_{k\to\infty}\frac{\log\mu(I_{m_{k}})}{\log|I_{m_{k}}|}=\frac{\theta-1-\theta\hat{v}}{(\theta-1)(\theta\hat{v}+1)}.
\end{equation*}
For any $\varepsilon>0$, we prove that
\begin{equation*}
\frac{\log\mu(I_{n})}{\log|I_{n}|}>(1-\varepsilon)\frac{\log\mu(I_{m_{k}})}{\log|I_{m_{k}}|}
\end{equation*}
for all $n$th basic intervals with $n$ large enough, where $k\geq2$ satisfying $a_{i_{k}}\leq n<a_{i_{k+1}}$. If $a_{i_{k}}\leq n\leq m_{k}$, then
\begin{equation*}
\frac{\log\mu(I_{n})}{\log|I_{n}|}\geq\frac{\log\mu(I_{m_{k}})}{\log|I_{m_{k}}|}.
\end{equation*}
If $m_{k}<n<a_{i_{k+1}}$, write $n=m_{k}+t(m_{k}-a_{i_{k}})+l$, where $t,l$ are integers with $0\leq t\leq t_{k}$ and $0\leq l<m_{k}-a_{i_{k}}$. Then we have
\begin{equation*}
\mu(I_{n})=\mu(I_{m_{k}})\cdot b^{-(t(m_{k}-a_{i_{k}}-1)+l)}\ {\rm and}\ |I_{n}|=|I_{m_{k}}|\cdot b^{-(t(m_{k}-a_{i_{k}})+l)}.
\end{equation*}
Since $0\leq t\leq t_{k}$ and $(t_{k})_{k\geq1}$ is bounded, for $n$ large enough, we have
\begin{equation*}
\begin{aligned}
&\frac{-\log\mu(I_{n})}{-\log|I_{n}|}\\
=&\frac{-\log\mu(I_{m_{k}})+(t(m_{k}-a_{i_{k}}-1)+l)\log b}{-\log|I_{m_{k}}|+(t(m_{k}-a_{i_{k}})+l)\log b}\\
=&\frac{-\log\mu(I_{m_{k}})+(t(m_{k}-a_{i_{k}}-1)+l)\log b}{-\log|I_{m_{k}}|+(t(m_{k}-a_{i_{k}}-1)+l)\log b}\cdot\frac{-\log|I_{m_{k}}|+(t(m_{k}-a_{i_{k}}-1)+l)\log b}{-\log|I_{m_{k}}|+(t(m_{k}-a_{i_{k}})+l)\log b}\\
\geq&\frac{-\log\mu(I_{m_{k}})+(t(m_{k}-a_{i_{k}}-1)+l)\log b}{-\log|I_{m_{k}}|+(t(m_{k}-a_{i_{k}}-1)+l)\log b}\cdot\frac{-\log|I_{m_{k}}|}{-\log|I_{m_{k}}|+t\log b}\\
>&(1-\varepsilon)\frac{\log\mu(I_{m_{k}})}{\log|I_{m_{k}}|},
\end{aligned}
\end{equation*}
where we have used the fact that
\begin{equation*}
\frac{a+x}{b+x}\geq\frac{a}{b}\ {\rm for\ all}\ 0<a\leq b,x\geq0.
\end{equation*}
Thus, for any $\varepsilon>0$, we have
\begin{equation*}
\liminf_{n\to\infty}\frac{\log\mu(I_{n})}{\log|I_{n}|}\geq(1-\varepsilon)\liminf_{k\to\infty}\frac{\log\mu(I_{m_{k}})}{\log|I_{m_{k}}|}.
\end{equation*}
Letting $\varepsilon\to0$, we obtain that
\begin{equation*}
\liminf_{n\to\infty}\frac{\log\mu(I_{n})}{\log|I_{n}|}\geq\liminf_{k\to\infty}\frac{\log\mu(I_{m_{k}})}{\log|I_{m_{k}}|}.
\end{equation*}
Thus,
\begin{equation}\label{221}
\liminf_{n\to\infty}\frac{\log\mu(I_{n})}{\log|I_{n}|}=\liminf_{k\to\infty}\frac{\log\mu(I_{m_{k}})}{\log|I_{m_{k}}|}=\frac{\theta-1-\theta\hat{v}}{(\theta-1)(\theta\hat{v}+1)}.
\end{equation}

Secondly, for each $x\in E_{\theta,\hat{v}}$, we prove that
\begin{equation}\label{222}
\liminf_{r\to0}\frac{\log\mu(B(x,r))}{\log r}=\liminf_{n\to\infty}\frac{\log\mu(I_{n}(x))}{\log|I_{n}(x)|},
\end{equation}
where $I_{n}(x)$ is the $n$th basic interval containing the point $x$. For any $r\in(0,b^{-a_{i_{2}}})$, there exists $n\geq a_{i_{2}}$, such that
\begin{equation*}
b^{-(n+1)}<r\leq b^{-n}.
\end{equation*}
We show that $I_{n+1}(x)\subset B(x,r)$ and $B(x,r)\cap E_{\theta,\hat{v}}$ is contained in three $n$th basic interval. For every $y\in I_{n+1}(x)$, we have
\begin{equation*}
|y-x|\leq b^{-(n+1)}<r.
\end{equation*}
Thus,
\begin{equation*}
I_{n+1}(x)\subset B(x,r).
\end{equation*}
Denote by $I_{n}'$ ($I_{n}''$) the nearest basic interval to the left (right) of $I_{n}(x)$, respectively. We prove that
\begin{equation*}
B(x,r)\cap E_{\theta,\hat{v}}\subset I_{n}'\cup I_{n}(x)\cup I_{n}''.
\end{equation*}
For all $y\in B(x.r)\cap E_{\theta,\hat{v}}$. Since $E_{\theta,\hat{v}}$ is contained in the union of all $n$th basic interval, we have $y$ belongs to the union of all $n$th basic interval. If $y\notin I_{n}'\cup I_{n}(x)\cup I_{n}''$, then
\begin{equation*}
|y-x|>b^{-n}\geq r.
\end{equation*}
This contradicts with $y\in B(x.r)\cap E_{\theta,\hat{v}}$. Therefore,
\begin{equation*}
B(x,r)\cap E_{\theta,\hat{v}}\subset I_{n}'\cup I_{n}(x)\cup I_{n}''.
\end{equation*}
Thus,
\begin{equation*}
\mu(I_{n+1}(x))\leq \mu(B(x,r))=\mu(B(x,r)\cap E_{\theta,\hat{v}})\leq 3\cdot\mu(I_{n}(x)).
\end{equation*}
By the above inequalities, we have established ($\ref{222}$). In view of ($\ref{221}$) and ($\ref{222}$), we have established ($\ref{220}$). Finally, by the mass distribution principle \cite[p.26]{KF}, we obtain that
\begin{equation*}
\dim_{\rm H}(E_{\theta,\hat{v}})\geq\frac{\theta-1-\theta\hat{v}}{(\theta-1)(\theta\hat{v}+1)}.
\end{equation*}
Thus,
\begin{equation*}
\dim_{\rm H}(\hat{\mathcal{V}}_{b,A}^{\ast}(\hat{v})\cap\mathcal{V}_{b,A}^{\ast}(\theta\hat{v}))\geq\frac{\theta-1-\theta\hat{v}}{(\theta-1)(\theta\hat{v}+1)}
\end{equation*}
for every $\theta\geq\frac{1}{1-\hat{v}}$. It then follows from Theorem $\ref{14}$ that
\begin{equation*}
\dim_{\rm H}(\hat{\mathcal{V}}_{b,A}^{\ast}(\hat{v})\cap\mathcal{V}_{b,A}^{\ast}(\theta\hat{v}))=\dim_{\rm H}(\hat{\mathcal{V}}_{b,A} (\hat{v})\cap\mathcal{V}_{b,A}^{\ast}(\theta\hat{v}))=\frac{\theta-1-\theta\hat{v}}{(\theta-1)(\theta\hat{v}+1)}
\end{equation*}
for any $\theta\geq\frac{1}{1-\hat{v}}$.
\textrm{}
\end{proof}
\section{Proof of Theorem $\ref{11}$}\label{40}
\begin{proof}[Proof of Theorem $\ref{11}$]
In view of Theorem $\ref{14}$, for any $\hat{v}\in(0,1)$, we have
\begin{equation*}
\hat{\mathcal{V}}_{b,A}^{\ast}(\hat{v})=\bigcup_{\theta\geq\frac{1}{1-\hat{v}}}\left(\hat{\mathcal{V}}_{b,A}^{\ast}(\hat{v})\cap\mathcal{V}_{b,A}^{\ast}(\theta\hat{v})\right).
\end{equation*}
It follows form Theorem $\ref{15}$ that
\begin{equation}\label{31}
\dim_{\rm H}(\hat{\mathcal{V}}_{b,A}^{\ast}(\hat{v}))\geq\sup_{\theta\geq\frac{1}{1-\hat{v}}}\frac{\theta-1-\theta\hat{v}}{(\theta-1)(\theta\hat{v}+1)}.
\end{equation}
A rapid calculation shows that the right hand side of ($\ref{31}$) is a continuous function of the parameter $\theta$ on the interval $[\frac{1}{1-\hat{v}},+\infty)$, reaching its maximum at the point $\theta_{0}=\frac{2}{1-\hat{v}}$. Therefore,
\begin{equation*}
\dim_{\rm H}(\hat{\mathcal{V}}_{b,A}^{\ast}(\hat{v}))\geq\sup_{\theta\geq\frac{1}{1-\hat{v}}}\frac{\theta-1-\theta\hat{v}}{(\theta-1)(\theta\hat{v}+1)}=\frac{\theta_{0}-1-\theta_{0}\hat{v}}{(\theta_{0}-1)(\theta_{0}\hat{v}+1)}=\left(\frac{1-\hat{v}}{1+\hat{v}}\right)^{2}.
\end{equation*}
On the other hand, by $\eqref{19}$, we know that
\begin{equation*}
\dim_{\rm H}(\hat{\mathcal{V}}_{b,A}^{\ast}(\hat{v}))\leq\dim_{\rm H}(\hat{\mathcal{V}}_{b,A}(\hat{v}))\leq\left(\frac{1-\hat{v}}{1+\hat{v}}\right)^{2}.
\end{equation*}
Thus,
\begin{equation*}
\dim_{\rm H}(\hat{\mathcal{V}}_{b,A}^{\ast}(\hat{v}))=\dim_{\rm H}(\hat{\mathcal{V}}_{b,A}(\hat{v}))=\left(\frac{1-\hat{v}}{1+\hat{v}}\right)^{2}
\end{equation*}
for any $\hat{v}\in[0,1]$.
\textrm{}
\end{proof}

\section{Proof of Theorems $\ref{12}$ and $\ref{13}$}\label{50}
The following lemma is the key to proving Theorem $\ref{12}$.
\begin{lemma}\label{41}
Let $\hat{v}\in(0,\eta)$. Assume that $\lim\limits_{n\to\infty}\frac{a_{n+1}}{a_{n}}$ exists and $1<\lim\limits_{n\to\infty}\frac{a_{n+1}}{a_{n}}<+\infty$. If $l$ is an non-negative integer such that $\frac{\eta^{l}-1}{\hat{v}}<\eta^{l}$, then for any $\theta\in \left(\frac{\eta^{l}-1}{\hat{v}},\eta^{l}\right)$, we have
\begin{equation*}
\hat{\mathcal{V}}_{b,A}(\hat{v})\cap\mathcal{V}_{b,A}^{\ast}(\theta\hat{v})=\emptyset.
\end{equation*}
\end{lemma}
\begin{proof}
Suppose that there exists $\theta\in\left(\frac{\eta^{l}-1}{\hat{v}},\eta^{l}\right)$, such that
\begin{equation*}
\hat{\mathcal{V}}_{b,A}(\hat{v})\cap\mathcal{V}_{b,A}^{\ast}(\theta\hat{v})\neq\emptyset.
\end{equation*}
Let $\xi\in\hat{\mathcal{V}}_{b,A}(\hat{v})\cap\mathcal{V}_{b,A}^{\ast}(\theta\hat{v})$, then
\begin{equation*}
\hat{v}_{b,A}(\xi)\geq\hat{v}\ {\rm and}\ v_{b,A}(\xi)=\theta\hat{v}.
\end{equation*}
By the proof of Lemma $\ref{21}$, we know that there exists a subsequence $(a_{i_{k}})_{k\geq1}$ of $(a_{k})_{k\geq1}$ and a subsequence $(m_{k})_{k\geq1}\subset\mathbb{N}$ of positive integers, with
\begin{equation*}
a_{i_{k}}+2\leq m_{k}\leq a_{i_{k+1}}+1,\ m_{k+1}-a_{i_{k+1}}>m_{k}-a_{i_{k}}\ {\rm for\ any}\ k\in\mathbb{N},
\end{equation*}
such that
\begin{equation*}
v_{b,A}(\xi)=\lim_{k\to\infty}\frac{m_{k}-a_{i_{k}}}{a_{i_{k}}}\ {\rm and}\ \hat{v}_{b,A}(\xi)=\liminf_{k\to\infty}\frac{m_{k}-a_{i_{k}}}{a_{i_{k+1}-1}}.
\end{equation*}
Thus,
\begin{equation}\label{42}
\lim_{k\to\infty}\frac{m_{k}-a_{i_{k}}}{a_{i_{k}}}=\theta\hat{v}
\end{equation}
and
\begin{equation}\label{43}
\liminf_{k\to\infty}\frac{m_{k}-a_{i_{k}}}{a_{i_{k+1}}}\geq\eta^{-1}\hat{v}.
\end{equation}
It follows from ($\ref{42}$) and ($\ref{43}$) that
\begin{equation*}
\limsup_{k\to\infty}\frac{a_{i_{k+1}}}{a_{i_{k}}}\leq\eta\theta<\eta^{l+1}.
\end{equation*}
Since $\lim\limits_{n\to\infty}\frac{a_{n+1}}{a_{n}}=\eta$, we have $\lim\limits_{k\to\infty}\frac{a_{i_{k}+l+1}}{a_{i_{k}}}=\eta^{l+1}$. Thus,
\begin{equation*}
i_{k+1}\leq i_{k}+l
\end{equation*}
for all $k$ large enough. By ($\ref{42}$), we have
\begin{equation*}
\lim_{k\to\infty}\frac{m_{k}}{a_{i_{k}}}=1+\theta\hat{v}>\eta^{l}.
\end{equation*}
Let $\varepsilon=1+\theta\hat{v}-\eta^{l}$, for all $k$ large enough, we obtain that
\begin{equation*}
m_{k}-a_{i_{k+1}}>\left(1+\theta\hat{v}-\frac{1}{4}\varepsilon\right)a_{i_{k}}-a_{i_{k}+l}>\left(1+\theta\hat{v}-\frac{1}{4}\varepsilon\right)a_{i_{k}}-\left(\eta^{l}+\frac{1}{4}\varepsilon\right)a_{i_{k}}= \frac{\varepsilon}{2}a_{i_{k}}.
\end{equation*}
It contradicts with $m_{k}-a_{i_{k+1}}\leq1$ for any $k\geq1$. Hence,
\begin{equation*}
\hat{\mathcal{V}}_{b,A}(\hat{v})\cap\mathcal{V}_{b,A}^{\ast}(\theta\hat{v})=\emptyset.
\end{equation*}
\textrm{}
\end{proof}

Now, we give the proof of Theorem $\ref{12}$.
\begin{proof}[Proof of Theorem $\ref{12}$]
Since
\begin{equation*}
\hat{v}\in\bigcup_{l=l_{0}}^{\infty}\left(\eta-\frac{2\eta}{\eta^{l}+1},\eta-\frac{2}{\eta^{l}}\right)\ {\rm and}\ l_{1}=\left\lfloor\frac{\log2-\log(\eta-\hat{v})}{\log \eta}\right\rfloor+1,
\end{equation*}
we have $l_{1}\geq l_{0}$,
\begin{equation}\label{44}
\eta-\frac{2\eta}{\eta^{l_{1}}+1}<\hat{v}<\eta-\frac{2}{\eta^{l_{1}}},
\end{equation}
and
\begin{equation}\label{45}
\frac{\eta^{l_{1}}-1}{\hat{v}}<\frac{2}{\eta-\hat{v}}<\eta^{l_{1}}.
\end{equation}
Let $l_{2}$ be the smallest positive integer such that $\eta-\frac{1}{\eta^{l_{2}}}\geq\hat{v}$. By the second inequality of ($\ref{44}$) and the definition of $l_{2}$, we have $l_{1}\geq l_{2}$. As the proofs for the case $l_{2}=1$ and the case $l_{2}\geq2$ are similar, we assume that $l_{2}\geq2$ and leave the details of the proof for the case $l_{2}=1$ to the reader.

According to the definition of $l_{2}$, for all $1\leq l\leq l_{2}-1$, we have $\eta^{l}<\frac{1}{\eta-\hat{v}}$. In view of Lemma $\ref{41}$, we have
\begin{equation}\label{46}
\dim_{\rm H}(\hat{\mathcal{V}}_{b,A}(\hat{v}))=\dim_{\rm H}\left(\bigcup_{l=l_{2}}^{\infty}\bigcup_{\theta\in\left[\eta^{l},\frac{\eta^{l+1}-1}{\hat{v}}\right]}\hat{\mathcal{V}}_{b,A}(\hat{v})\cap\mathcal{V}_{b,A}^{\ast}(\theta\hat{v})\right).
\end{equation}
We consider two cases.\\
Case 1: $\eta^{l_{2}}>\frac{1}{\eta-\hat{v}}$. For any $\rho>0$ with
\begin{equation*}
\rho\hat{v}^{-1}<\frac{1}{2}\min\left\{\eta^{l_{2}}-\frac{1}{\eta-\hat{v}},\eta^{l_{1}}-\frac{2}{\eta-\hat{v}}\right\},
\end{equation*}
we have
\begin{equation}\label{47}
\begin{aligned}
&\quad\bigcup_{l=l_{2}}^{\infty}\bigcup_{\theta\in\left[\eta^{l},\frac{\eta^{l+1}-1}{\hat{v}}\right]}\left(\hat{\mathcal{V}}_{b,A}(\hat{v})\cap\mathcal{V}_{b,A}^{\ast}(\theta\hat{v})\right)\\
&\subset\bigcup_{l=l_{2}}^{\infty}\bigcup_{\theta\in\left[\eta^{l}-\rho\hat{v}^{-1},\frac{\eta^{l+1}-1}{\hat{v}}\right]\cap\mathbb{Q}}\left(\{\xi\in\mathbb{R}:\hat{v}_{b,A}(\xi)\geq\hat{v}\}\cap\{\xi\in\mathbb{R}:\theta\hat{v}\leq v_{b,A}(\xi)\leq\theta\hat{v}+\rho\}\right).
\end{aligned}
\end{equation}
Furthermore, by Theorem $\ref{14}$, for $\rho>0$, we have
\begin{equation}\label{48}
\begin{aligned}
&\quad\dim_{\rm H}\left(\{\xi\in\mathbb{R}:\hat{v}_{b,A}(\xi)\geq\hat{v}\}\cap\{\xi\in\mathbb{R}:\theta\hat{v}\leq v_{b,A}(\xi)\leq\theta\hat{v}+\rho\}\right)\\
&\leq\frac{\hat{v}(\eta\theta-1-\theta\hat{v})+\rho(\eta-\hat{v})}{(1+\theta\hat{v})((\eta\theta-1)\hat{v}+\rho\eta)}.
\end{aligned}
\end{equation}
Combining ($\ref{46}$), ($\ref{47}$), ($\ref{48}$) and the countable stability of Hausdorff dimension, we obtain that
\begin{equation*}
\begin{aligned}
\dim_{\rm H}(\hat{\mathcal{V}}_{b,A}(\hat{v}))&\leq\sup_{l\geq l_{2}}\sup_{\theta\in \left[\eta^{l}-\rho\hat{v}^{-1},\frac{\eta^{l+1}-1}{\hat{v}}\right]\cap\mathbb{Q}}\frac{\hat{v}(\eta\theta-1-\theta\hat{v})+\rho(\eta-\hat{v})}{(1+\theta\hat{v})((\eta\theta-1)\hat{v}+\rho\eta)}\\
&\leq\sup_{l\geq l_{2}}\sup_{\theta\in \left[\eta^{l}-\rho\hat{v}^{-1},\frac{\eta^{l+1}-1}{\hat{v}}\right]}\frac{\hat{v}(\eta\theta-1-\theta\hat{v})+\rho(\eta-\hat{v})}{(1+\theta\hat{v})((\eta\theta-1)\hat{v}+\rho\eta)}\\
&\leq\sup_{l\geq l_{2}}\sup_{\theta\in\left[\eta^{l}-\rho\hat{v}^{-1},\frac{\eta^{l+1}-1}{\hat{v}}\right]}\frac{\eta\theta-1-\theta\hat{v}}{(\eta\theta-1)(1+\theta\hat{v}) }+\frac{\rho(\eta-\hat{v})^{3}}{\eta\hat{v}^{2}}.
\end{aligned}
\end{equation*}
Let $f(\theta)=\frac{\eta\theta-1-\theta\hat{v}}{(1+\theta\hat{v})(\eta\theta-1)}, \theta\in[\frac{1}{\eta-\hat{v}},+\infty)$. An easy calculation  shows that $f(\theta)$ is strictly increasing in $[\frac{1}{\eta-\hat{v}},\frac{2}{\eta-\hat{v}}]$  and strictly decreasing in $[\frac{2}{\eta-\hat{v}},+\infty)$. \\
If $l_{2}=l_{1}$, then the supremum is attained at $\theta = \eta^{l_{1}}-\rho\hat{v}^{-1}$, hence
\begin{equation*}
\begin{aligned}
&\quad\dim_{\rm H}(\hat{\mathcal{V}}_{b,A}(\hat{v}))\\
&\leq\frac{\eta(\eta^{l_{1}}-\rho\hat{v}^{-1})-1-(\eta^{l_{1}}-\rho\hat{v}^{-1})\hat{v}}{(\eta(\eta^{l_{1}}-\rho\hat{v}^{-1})-1)(1+(\eta^{l_{1}}-\rho\hat{v}^{-1})\hat{v}) }+\frac{\rho(\eta-\hat{v})^{3}}{\eta\hat{v}^{2}}.
\end{aligned}
\end{equation*}
If $l_{2}<l_{1}$, then the supremum is attained at either $\theta = \eta^{l_{1}}-\rho\hat{v}^{-1}$ or $\theta =\frac{\eta^{l_{1}}-1}{\hat{v}}$, hence
\begin{equation*}
\begin{aligned}
&\quad\dim_{\rm H}(\hat{\mathcal{V}}_{b,A}(\hat{v}))\\
&\leq\max\left\{\frac{\eta(\eta^{l_{1}}-\rho\hat{v}^{-1})-1-(\eta^{l_{1}}-\rho\hat{v}^{-1})\hat{v}}{(\eta(\eta^{l_{1}}-\rho\hat{v}^{-1})-1)(1+(\eta^{l_{1}}-\rho\hat{v}^{-1})\hat{v}) },\frac{\eta\frac{\eta^{l_{1}}-1}{\hat{v}}-1-\frac{\eta^{l_{1}}-1}{\hat{v}}\hat{v}}{(\eta\frac{\eta^{l_{1}}-1}{\hat{v}}-1)(1+\frac{\eta^{l_{1}}-1}{\hat{v}}\hat{v}) }\right\}+\frac{\rho(\eta-\hat{v})^{3}}{\eta\hat{v}^{2}}.
\end{aligned}
\end{equation*}
Letting $\rho\to0$, we obtain that
\begin{equation*}
\dim_{\rm H}(\hat{\mathcal{V}}_{b,A}(\hat{v}))\leq \begin{cases} \frac{\eta^{l_{1}+1}-1-\eta^{l_{1}}\hat{v}}{(\eta^{l_{1}+1}-1)(1+\eta^{l_{1}}\hat{v})}, &\textit{if}\ l_{2}=l_{1},\\
\max\left\{\frac{\eta^{l_{1}+1}-1-\eta^{l_{1}}\hat{v}}{(\eta^{l_{1}+1}-1)(1+\eta^{l_{1}}\hat{v}) },\frac{\eta^{l_{1}}-1-\eta^{l_{1}-1}\hat{v}}{\eta^{l_{1}-1}(\eta(\eta^{l_{1}}-1)-\hat{v})}\right\}, &\textit{if}\ l_{2}<l_{1}.\end{cases}
\end{equation*}
Hence,
\begin{equation*}
\dim_{\rm H}(\hat{\mathcal{V}}_{b,A}(\hat{v}))\leq\max\left\{\frac{\eta^{l_{1}+1}-1-\eta^{l_{1}}\hat{v}}{(\eta^{l_{1}+1}-1)(1+\eta^{l_{1}}\hat{v}) },\frac{\eta^{l_{1}}-1-\eta^{l_{1}-1}\hat{v}}{\eta^{l_{1}-1}(\eta(\eta^{l_{1}}-1)-\hat{v})}\right\}.
\end{equation*}
Case 2: $\eta^{l_{2}}=\frac{1}{\eta-\hat{v}}$. In this case, we must have $l_{2}<l_{1}$. It follows from Theorem $\ref{14}$ that
\begin{equation*}
\dim_{\rm H}\left(\hat{\mathcal{V}}_{b,A}(\hat{v})\cap\mathcal{V}_{b,A}^{\ast}\left(\frac{1}{\eta-\hat{v}}\hat{v}\right)\right)=0.
\end{equation*}
In view of $\eqref{46}$, we have
\begin{equation*}
\begin{aligned}
&\quad\dim_{\rm H}(\hat{\mathcal{V}}_{b,A}(\hat{v}))\\
&=\dim_{\rm H}\left(\bigcup_{\theta\in\left(\eta^{l_{2}},\frac{\eta^{l_{2}+1}-1}{\hat{v}}\right]}(\hat{\mathcal{V}}_{b,A}(\hat{v})\cap\mathcal{V}_{b,A}^{\ast}(\theta\hat{v}))\cup\bigcup_{l=l_{2}+1}^{\infty} \bigcup_{\theta\in\left[\eta^{l},\frac{\eta^{l+1}-1}{\hat{v}}\right]}(\hat{\mathcal{V}}_{b,A}(\hat{v})\cap\mathcal{V}_{b,A}^{\ast}(\theta\hat{v}))\right).
\end{aligned}
\end{equation*}
For any $\rho>0$ with
\begin{equation*}
\rho\hat{v}^{-1}<\frac{1}{2}\left(\eta^{l_{1}}-\frac{2}{\eta-\hat{v}}\right),
\end{equation*}
we have
\begin{equation*}
\begin{aligned}
&\quad\bigcup_{\theta\in\left(\eta^{l_{2}},\frac{\eta^{l_{2}+1}-1}{\hat{v}}\right]}(\hat{\mathcal{V}}_{b,A}(\hat{v})\cap\mathcal{V}_{b,A}^{\ast}(\theta\hat{v}))\cup\bigcup_{l=l_{2}+1}^{\infty} \bigcup_{\theta\in\left[\eta^{l},\frac{\eta^{l+1}-1}{\hat{v}}\right]}(\hat{\mathcal{V}}_{b,A}(\hat{v})\cap\mathcal{V}_{b,A}^{\ast}(\theta\hat{v}))\\
&\subset\bigcup_{\theta\in\left[\frac{1}{\eta-\hat{v}},\frac{\eta^{l_{2}+1}-1}{\hat{v}}\right]\cap\mathbb{Q}}\{\xi\in\mathbb{R}:\hat{v}_{b,A}(\xi)\geq\hat{v}\ {\rm and}\ \theta\hat{v}\leq v_{b,A}(\xi)\leq\theta\hat{v}+\rho\}\\
&\cup\bigcup_{l=l_{2}+1}^{\infty}\bigcup_{\theta\in\left[\eta^{l}-\rho\hat{v}^{-1},\frac{\eta^{l+1}-1}{\hat{v}}\right]\cap\mathbb{Q}}\{\xi\in\mathbb{R}:\hat{v}_{b,A}(\xi)\geq\hat{v}\ {\rm and}\ \theta\hat{v}\leq v_{b,A}(\xi)\leq\theta\hat{v}+\rho\}.
\end{aligned}
\end{equation*}
Similar with the proof of Case 1, we obtain that
\begin{equation*}
\dim_{\rm H}(\hat{\mathcal{V}}_{b,A}(\hat{v}))\leq\max\left\{\frac{\eta^{l_{1}+1}-1-\eta^{l_{1}}\hat{v}}{(\eta^{l_{1}+1}-1)(1+\eta^{l_{1}}\hat{v}) },\frac{\eta^{l_{1}}-1-\eta^{l_{1}-1}\hat{v}}{\eta^{l_{1}-1}(\eta(\eta^{l_{1}}-1)-\hat{v})}\right\}.
\end{equation*}
An easy calculation shows that
\begin{equation*}
\max\left\{\frac{\eta^{l_{1}+1}-1-\eta^{l_{1}}\hat{v}}{(\eta^{l_{1}+1}-1)(1+\eta^{l_{1}}\hat{v}) },\frac{\eta^{l_{1}}-1-\eta^{l_{1}-1}\hat{v}}{\eta^{l_{1}-1}(\eta(\eta^{l_{1}}-1)-\hat{v})}\right\}<\left(\frac{\eta-\hat{v}}{\eta+\hat{v}}\right)^{2}.
\end{equation*}
Thus,
\begin{equation*}
\begin{aligned}
\dim_{\rm H}(\hat{\mathcal{V}}_{b,A}(\hat{v}))&\leq\max\left\{\frac{\eta^{l_{1}+1}-1-\eta^{l_{1}}\hat{v}}{(\eta^{l_{1}+1}-1)(1+\eta^{l_{1}}\hat{v}) },\frac{\eta^{l_{1}}-1-\eta^{l_{1}-1}\hat{v}}{\eta^{l_{1}-1}(\eta(\eta^{l_{1}}-1)-\hat{v})}\right\}\\
&<\left(\frac{\eta-\hat{v}}{\eta+\hat{v}}\right)^{2}.
\end{aligned}
\end{equation*}
\textrm{}
\end{proof}

In order to prove Theorem $\ref{13}$, we need the following lemma.
\begin{lemma}\label{49}
Let $\hat{v}\in(0,\eta)$. Suppose that $\lim\limits_{n\to\infty}\frac{a_{n+1}}{a_{n}}$ exists and $1<\lim\limits_{n\to\infty}\frac{a_{n+1}}{a_{n}}<+\infty$, denote
\begin{equation*}
l'=l'(\hat{v}):=\max\left\{1,\left\lfloor\frac{-\log(\eta-\hat{v})}{\log\eta}\right\rfloor+1\right\}.
\end{equation*}
If $\theta\in\left[\eta^{l},\frac{\eta^{l+1}-1}{\hat{v}}\right)$ for some integer $l\geq l'$, we have
\begin{equation*}
\dim_{\rm H}(\hat{\mathcal{V}}_{b,A}(\hat{v})\cap\mathcal{V}_{b,A}^{\ast}(\theta\hat{v}))\geq\frac{\eta^{l+1}-1-\theta\hat{v}}{(\eta^{l+1}-1)(1+\theta\hat{v})}.
\end{equation*}
Furthermore, if $\theta = \eta^{l}$ for some integer $l\geq l'$, we have the following stronger result,
\begin{equation*}
\dim_{\rm H}\left(\hat{\mathcal{V}}_{b,A}(\hat{v})\cap\mathcal{V}_{b,A}^{\ast}(\eta^{l}\hat{v})\right)=\dim_{\rm H}\left(\hat{\mathcal{V}}_{b,A}^{\ast}(\hat{v})\cap\mathcal{V}_{b,A}^{\ast}(\eta^{l}\hat{v})\right)=\frac{\eta^{l+1}-1- \eta^{l}\hat{v}}{(\eta^{l+1}-1)(1+\eta^{l}\hat{v})}.
\end{equation*}
\end{lemma}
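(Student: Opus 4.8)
The plan is to adapt the Cantor-set construction from the proof of Theorem \ref{15} to the case $\eta>1$, the only essential change being that the ``resonance indices'' $i_k$ are now spaced exactly $l+1$ apart rather than chosen so that $a_{i_{k+1}}\approx\theta a_{i_k}$. This is where one uses that $\lim_n a_{n+1}/a_n=\eta$ exists, and not merely $\limsup_n a_{n+1}/a_n=\eta$: it forces $a_{i_k+j}/a_{i_k}\to\eta^{j}$ for every fixed $j$ along any $i_k\to\infty$.

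First I would fix $l\ge l'$ and $\theta\in[\eta^{l},\tfrac{\eta^{l+1}-1}{\hat v})$ (a nonempty interval since $l\ge l'$), choose $i_1$ large, and set recursively $i_{k+1}=i_k+l+1$ and $m_k=\lfloor(1+\theta\hat v)a_{i_k}\rfloor$. Then $a_{i_{k+1}}/a_{i_k}\to\eta^{l+1}$, $a_{i_{k+1}-1}/a_{i_k}\to\eta^{l}$ and $m_k/a_{i_k}\to1+\theta\hat v$, and since $\theta\hat v<\eta^{l+1}-1$ one gets, for all $k$ once $i_1$ is large enough, that $a_{i_k}+3\le m_k\le a_{i_{k+1}}-2$ and $m_{k+1}-a_{i_{k+1}}>m_k-a_{i_k}$. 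Let $E_{\theta,\hat v}$ be the set of $\xi\in(0,1)$ whose base-$b$ digit string satisfies, for each $k\ge1$, $x_{a_{i_k}}=1$, $x_{a_{i_k}+1}=\cdots=x_{m_k-1}=0$, $x_{m_k}=1$, together with the anti-resonance digits $x_{m_k+t(m_k-a_{i_k})}=1$ for $1\le t\le t_k$ (and the extra $0$'s right before them when $b=2$, exactly as in Theorem \ref{15}), where $t_k$ is the largest integer with $m_k+t_k(m_k-a_{i_k})<a_{i_{k+1}}$; all remaining digits are free. As in Theorem \ref{15}, $(t_k)_{k\ge1}$ is bounded because $t_k<\frac{a_{i_{k+1}}-m_k}{m_k-a_{i_k}}\to\frac{\eta^{l+1}-1-\theta\hat v}{\theta\hat v}$.

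The substantial step is to pin down the two exponents on $E_{\theta,\hat v}$. The prescribed $0$-block gives $\|b^{a_{i_k}}\xi\|\asymp b^{-(m_k-a_{i_k})}$, so $\tfrac{-\log\|b^{a_{i_k}}\xi\|}{a_{i_k}\log b}\to\theta\hat v$; the anti-resonance digits force every maximal block of $0$'s (and of $b-1$'s when $b=2$) inside $x_{a_{i_k}}\cdots x_{a_{i_{k+1}-1}}$ to have length at most $m_k-a_{i_k}-1$, so $\|b^{a_n}\xi\|\gtrsim b^{-(m_k-a_{i_k})}$ for $i_k\le n<i_{k+1}$, and since $a_n\ge a_{i_k+1}\sim\eta a_{i_k}$ for $n>i_k$ such indices contribute at most $\tfrac{m_k-a_{i_k}}{a_{i_k+1}}\to\tfrac{\theta\hat v}{\eta}<\theta\hat v$; hence $v_{b,A}(\xi)=\theta\hat v$. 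For the uniform exponent, for every large $N$ with $i_k\le N<i_{k+1}$ the index $n=i_k$ gives $\min_{1\le n\le N}\|b^{a_n}\xi\|\le\|b^{a_{i_k}}\xi\|$ with $\tfrac{-\log\|b^{a_{i_k}}\xi\|}{a_N\log b}\ge\tfrac{m_k-a_{i_k}}{a_{i_{k+1}-1}}\to\tfrac{\theta\hat v}{\eta^{l}}\ge\hat v$, whence $\hat v_{b,A}(\xi)\ge\hat v$ and $E_{\theta,\hat v}\subset\hat{\mathcal V}_{b,A}(\hat v)\cap\mathcal V_{b,A}^{\ast}(\theta\hat v)$; moreover, when $\theta=\eta^{l}$ this estimate is asymptotically sharp and, since $\min_{1\le n\le i_{k+1}-1}\|b^{a_n}\xi\|\asymp b^{-(m_k-a_{i_k})}$, it even yields $\hat v_{b,A}(\xi)=\hat v$, i.e.\ $E_{\eta^{l},\hat v}\subset\hat{\mathcal V}_{b,A}^{\ast}(\hat v)\cap\mathcal V_{b,A}^{\ast}(\eta^{l}\hat v)$. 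I expect this bookkeeping with the new spacing — in particular checking that no intermediate index $n$ spoils either exponent — to be the main obstacle; the rest is a routine transcription of \S2.

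Finally I would bound $\dim_{\rm H}E_{\theta,\hat v}$ from below by the mass distribution principle, just as in Theorem \ref{15}: putting the uniform Bernoulli measure $\mu$ on $E_{\theta,\hat v}$, the number of free digits among the first $m_k$ places equals $a_{i_1}-1+\sum_{j=1}^{k-1}(a_{i_{j+1}}-m_j-t_j-1)$, so by the same steps as in Theorem \ref{15} — boundedness of $(t_k)$, exponential growth of $(m_k)$, and the reduction from balls and $n$-th intervals to the intervals $I_{m_k}$ — one obtains $\liminf_n\tfrac{\log\mu(I_n)}{\log|I_n|}=\lim_k\tfrac{\sum_{j=1}^{k-1}(a_{i_{j+1}}-m_j)}{m_k}$, and the Stolz--Ces\`{a}ro theorem together with $a_{i_{k+1}}/m_k\to\tfrac{\eta^{l+1}}{1+\theta\hat v}$ and $m_{k+1}/m_k\to\eta^{l+1}$ gives
\[
\lim_{k\to\infty}\frac{a_{i_{k+1}}-m_k}{m_{k+1}-m_k}=\frac{\eta^{l+1}-1-\theta\hat v}{(\eta^{l+1}-1)(1+\theta\hat v)}.
\]
Hence $\dim_{\rm H}E_{\theta,\hat v}\ge\tfrac{\eta^{l+1}-1-\theta\hat v}{(\eta^{l+1}-1)(1+\theta\hat v)}$, and the two inclusions established above give both claimed lower bounds.
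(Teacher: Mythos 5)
Your proposal is correct and coincides with the paper's own argument: the paper uses exactly the same construction ($i_{k+1}=i_k+l+1$, $m_k=\lfloor(1+\theta\hat v)a_{i_k}\rfloor$, the same digit set $E_{\theta,\hat v}$ with anti-resonance digits), verifies the same limits $\frac{m_k-a_{i_k}}{a_{i_k}}\to\theta\hat v$ and $\frac{m_k-a_{i_k}}{a_{i_{k+1}}}\to\theta\hat v\eta^{-l-1}\ge\eta^{-1}\hat v$ (with equality exactly when $\theta=\eta^{l}$, giving the stronger inclusion into $\hat{\mathcal V}_{b,A}^{\ast}(\hat v)$), and then invokes the mass-distribution/Stolz--Ces\`{a}ro computation of Theorem \ref{15} verbatim to get the dimension bound. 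Your write-up is in fact slightly more explicit than the paper's on the bookkeeping for the two exponents.
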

\begin{proof}
Our idea is to construct a suitable Cantor type subset of $\hat{\mathcal{V}}_{b,A}(\hat{v})\cap\mathcal{V}_{b,A}^{\ast}(\theta\hat{v})$. Since $1+\theta\hat{v}<\eta^{l+1}$, $\lim\limits_{n\to\infty}a_{n}=+\infty$ and $\lim\limits_{n\to\infty}\frac{a_{n+1}}{a_{n}}= \eta$, there exists $N\in\mathbb{N}$, such that for any $n\geq N$, we have
\begin{equation*}
a_{n}>3(\theta\hat{v})^{-1},\ a_{n+l+1}-a_{n}>(\theta\hat{v})^{-1}\ {\rm and}\ (1+\theta\hat{v})a_{n}\leq a_{n+l+1}-2.
\end{equation*}
Fix $i_{1}\geq N$, let $m_{1}=\lfloor(1+\theta\hat{v})a_{i_{1}}\rfloor$, $i_{2}=i_{1}+l+1$. Let $k\geq1$ be such that $i_{k}$ has been defined. Define $m_{k}=\lfloor(1+\theta\hat{v})a_{i_{k}}\rfloor$ and $i_{k+1}=i_{k}+l+1$. By the above construction, for each $k\geq1$, we have
\begin{equation*}
a_{i_{k}}+3\leq m_{k}\leq a_{i_{k+1}}-2,\  m_{k+1}-a_{i_{k+1}}>m_{k}-a_{i_{k}},
\end{equation*}
\begin{equation*}
\lim_{k\to\infty}\frac{m_{k}-a_{i_{k}}}{a_{i_{k}}}=\theta\hat{v}\ {\rm and}\ \lim_{k\to\infty}\frac{m_{k}-a_{i_{k}}}{a_{i_{k+1}}}=\theta\hat{v}\eta^{-l-1}\geq\eta^{-1}\hat{v}.
\end{equation*}

If $b\geq3$, let $E_{\theta,\hat{v}}$ be the set of all real numbers $\xi$ in $(0,1)$ whose $b$-ary expansion $\xi=\sum_{j\geq1}\frac{x_{j}}{b^{j}}$ satisfies, for each $k\geq1$,
\begin{equation*}
x_{a_{i_{k}}}=1,x_{a_{i_{k}}+1}=\ldots=x_{m_{k}-1}=0,x_{m_{k}}=1,
\end{equation*}
and
\begin{equation*}
x_{m_{k}+(m_{k}-a_{i_{k}})}=x_{m_{k}+2(m_{k}-a_{i_{k}})}=\ldots=x_{m_{k}+t_{k}(m_{k}-a_{i_{k}})}=1,
\end{equation*}
where $t_{k}$ is the largest integer such that $m_{k}+t_{k}(m_{k}-a_{i_{k}})<a_{i_{k+1}}$.

If $b=2$, let $E_{\theta,\hat{v}}$ be the set of all real numbers $\xi$ in $(0,1)$ whose $b$-ary expansion $\xi=\sum_{j\geq1}\frac{x_{j}}{b^{j}}$ satisfies, for each $k\geq1$,
\begin{equation*}
x_{a_{i_{k}}}=1,x_{a_{i_{k}}+1}=\ldots=x_{m_{k}-1}=0,x_{m_{k}}=1,
\end{equation*}
\begin{equation*}
x_{m_{k}+(m_{k}-a_{i_{k}})}=x_{m_{k}+2(m_{k}-a_{i_{k}})}=\ldots=x_{m_{k}+t_{k}(m_{k}-a_{i_{k}})}=1,
\end{equation*}
and
\begin{equation*}
x_{m_{k}+(m_{k}-a_{i_{k}})-1}=x_{m_{k}+2(m_{k}-a_{i_{k}})}=\ldots=x_{m_{k}+t_{k}(m_{k}-a_{i_{k}})-1}=0,
\end{equation*}
where $t_{k}$ is the largest integer such that $m_{k}+t_{k}(m_{k}-a_{i_{k}})<a_{i_{k+1}}$.

By the above construction, we have
\begin{equation*}
E_{\theta,\hat{v}}\subset\hat{\mathcal{V}}_{b,A}(\hat{v})\cap\mathcal{V}_{b,A}^{\ast}(\theta\hat{v}).
\end{equation*}
Furthermore, similar to the proof of Theorem $\ref{15}$, we have
\begin{equation*}
\dim_{\rm H}(E_{\theta,\hat{v}})\geq\frac{\eta^{l+1}-1-\theta\hat{v}}{(\eta^{l+1}-1)(1+\theta\hat{v})}.
\end{equation*}
Thus,
\begin{equation*}
\dim_{\rm H}(\hat{\mathcal{V}}_{b,A}(\hat{v})\cap\mathcal{V}_{b,A}^{\ast}(\theta\hat{v}))\geq\frac{\eta^{l+1}-1-\theta\hat{v}}{(\eta^{l+1}-1)(1+\theta\hat{v})}.
\end{equation*}
When $\theta=\eta^{l}$, we have
\begin{equation*}
\lim_{k\to\infty}\frac{m_{k}-a_{i_{k}}}{a_{i_{k}}}=\theta\hat{v}\ {\rm and}\ \lim_{k\to\infty}\frac{m_{k}-a_{i_{k}}}{a_{i_{k+1}}}=\theta\hat{v}\eta^{-l-1}=\eta^{-1}\hat{v}.
\end{equation*}
Thus, for any $\xi\in E_{\theta,\hat{v}}$, we have
\begin{equation*}
\hat{v}_{b,A}(\xi)=\hat{v}\ {\rm and}\ v_{b,A}(\xi)=\theta\hat{v}.
\end{equation*}
Therefore, when $\theta=\eta^{l}$, we have $E_{\theta,\hat{v}}\subset\hat{\mathcal{V}}_{b,A}^{\ast}(\hat{v})\cap\mathcal{V}_{b,A}^{\ast}(\theta\hat{v})$. Hence,
\begin{equation*}
\dim_{\rm H}\left(\hat{\mathcal{V}}_{b,A}^{\ast}(\hat{v})\cap\mathcal{V}_{b,A}^{\ast}(\eta^{l}\hat{v})\right)\geq\frac{\eta^{l+1}-1- \eta^{l}\hat{v}}{(\eta^{l+1}-1)(1+\eta^{l}\hat{v})}.
\end{equation*}
This together with Theorem $\ref{14}$ gives
\begin{equation*}
\dim_{\rm H}\left(\hat{\mathcal{V}}_{b,A}(\hat{v})\cap\mathcal{V}_{b,A}^{\ast}(\eta^{l}\hat{v})\right)=\dim_{\rm H}\left(\hat{\mathcal{V}}_{b,A}^{\ast}(\hat{v})\cap\mathcal{V}_{b,A}^{\ast}(\eta^{l}\hat{v})\right)=\frac{\eta^{l+1}-1- \eta^{l}\hat{v}}{(\eta^{l+1}-1)(1+\eta^{l}\hat{v})}.
\end{equation*}
\textrm{}
\end{proof}
\begin{proof}[Proof of Theorem $\ref{13}$]
By Lemma $\ref{49}$, we have
\begin{equation*}
\dim_{\rm H}(\hat{\mathcal{V}}_{b,A}^{\ast}(\hat{v}))\geq\frac{\eta^{l+1}-1-\eta^{l}\hat{v}}{(\eta^{l+1}-1)(1+\eta^{l}\hat{v})}
\end{equation*}
for all $l\in\mathbb{N}$ with $l\geq l'$. Hence,
\begin{equation}\label{410}
\dim_{\rm H}(\hat{\mathcal{V}}_{b,A}^{\ast}(\hat{v}))\geq\sup_{l\geq l'}\frac{\eta^{l+1}-1-\eta^{l}\hat{v}}{(\eta^{l+1}-1)(1+\eta^{l}\hat{v})}.
\end{equation}
An easy calculation shows that the right hand side of ($\ref{410}$) reaching its supremum at $l=\tilde{l}$, where
\begin{equation*}
\tilde{l}:=\max\left\{1,\left\lfloor\frac{\log(\eta+1)-\log(\eta-\hat{v})}{\log\eta}\right\rfloor\right\}.
\end{equation*}
Therefore,
\begin{equation*}
\dim_{\rm H}(\hat{\mathcal{V}}_{b,A}^{\ast}(\hat{v}))\geq\frac{\eta^{\tilde{l}+1}-1-\eta^{\tilde{l}}\hat{v}}{(\eta^{\tilde{l}+1}-1)(1+\eta^{\tilde{l}}\hat{v})}.
\end{equation*}
\textrm{}
\end{proof}

\section{Proof of Corollary $\ref{16}$}\label{60}
\begin{proof}[Proof of Corollary $\ref{16}$]
For any $\hat{v}=\eta-\frac{2}{\eta^{l}}$ with some integer $l\geq l_{0}$, we have
\begin{equation*}
\eta^{l}<\frac{\eta+1}{\eta-\hat{v}}=\frac{(\eta+1)\eta^{l}}{2}<\eta^{l+1}.
\end{equation*}
Thus, $\tilde{l}=l$. It follows from Theorem $\ref{13}$ that
\begin{equation*}
\dim_{\rm H}(\hat{\mathcal{V}}_{b,A}^{\ast}(\hat{v}))\geq\frac{\eta^{l+1}-1-\eta^{l}\hat{v}}{(\eta^{l+1}-1)(1+\eta^{l}\hat{v})}=\frac{\eta\cdot\frac{2}{\eta-\hat{v}}-1-\frac{2}{\eta-\hat{v}}\cdot\hat{v}}{(\eta\cdot\frac{2}{\eta-\hat{v}}-1)(1+\frac{2}{\eta-\hat{v}}\cdot\hat{v})}=\left(\frac{\eta-\hat{v}}{\eta+\hat{v}}\right)^{2}.
\end{equation*}
Since
\begin{equation*}
\dim_{\rm H}(\hat{\mathcal{V}}_{b,A}^{\ast}(\hat{v}))\leq\dim_{\rm H}(\hat{\mathcal{V}}_{b,A}(\hat{v}))\leq\left(\frac{\eta-\hat{v}}{\eta+\hat{v}}\right)^{2},
\end{equation*}
we have
\begin{equation*}
\dim_{\rm H}(\hat{\mathcal{V}}_{b,A}^{\ast}(\hat{v}))=\dim_{\rm H}(\hat{\mathcal{V}}_{b,A}(\hat{v}))=\left(\frac{\eta-\hat{v}}{\eta+\hat{v}}\right)^{2}=\frac{\eta^{\tilde{l}+1}-1-\eta^{\tilde{l}}\hat{v}}{(\eta^{\tilde{l}+1}-1)(1+\eta^{\tilde{l}}\hat{v})}.
\end{equation*}
For any
\begin{equation*}
\hat{v}\in\bigcup_{l=l_{0}}^{\infty}\left [\eta-\frac{\eta^{l}+\eta^{l +1}-1}{\eta^{2l}},\eta-\frac{2}{\eta^{l}}\right),
\end{equation*}
there exists $l\geq l_{0}$, such that
\begin{equation}\label{51}
\hat{v}\in\left[\eta-\frac{\eta^{l}+\eta^{l +1}-1}{\eta^{2l}},\eta-\frac{2}{\eta^{l}}\right).
\end{equation}
Since
\begin{equation*}
\eta-\frac{\eta^{l}+\eta^{l +1}-1}{\eta^{2l}}>\eta-\frac{\eta+1}{\eta^{l}}
\end{equation*}
and
\begin{equation*}
\eta-\frac{2}{\eta^{l}}<\eta-\frac{\eta+1}{\eta^{l+1}},
\end{equation*}
we have
\begin{equation*}
\eta-\frac{\eta+1}{\eta^{l}}<\hat{v}<\eta-\frac{\eta+1}{\eta^{l+1}}.
\end{equation*}
Thus,
\begin{equation}\label{52}
l=\left\lfloor\frac{\log(\eta+1)-\log(\eta-\hat{v})}{\log\eta}\right\rfloor=\tilde{l}.
\end{equation}
In view of Theorem $\ref{13}$, we have
\begin{equation}\label{53}
\dim_{\rm H}(\hat{\mathcal{V}}_{b,A}^{\ast}(\hat{v}))\geq\frac{\eta^{ l+1}-1-\eta^{ l}\hat{v}}{(\eta^{l+1}-1)(1+\eta^{l}\hat{v})}.
\end{equation}
On the other hand, an easy calculation shows that
\begin{equation*}
\eta-\frac{\eta^{l}+\eta^{l +1}-1}{\eta^{2l}}>\eta-\frac{2\eta}{\eta^{l}+1}.
\end{equation*}
By Theorem $\ref{12}$, we know that
\begin{equation*}
\dim_{\rm H}(\hat{\mathcal{V}}_{b,A}(\hat{v}))\leq\max\left\{\frac{\eta^{l+1}-1-\eta^{l}\hat{v}}{(\eta^{l+1}-1)(1+\eta^{l}\hat{v}) },\frac{\eta^{l}-1-\eta^{l-1}\hat{v}}{\eta^{l-1}(\eta(\eta^{l}-1)-\hat{v})}\right\}.
\end{equation*}
Note that
\begin{equation*}
\begin{aligned}
&\quad\frac{\eta^{l+1}-1-\eta^{l}\hat{v}}{(\eta^{l+1}-1)(1+\eta^{l}\hat{v}) }-\frac{\eta^{l}-1-\eta^{l-1}\hat{v}}{\eta^{l-1}(\eta(\eta^{l}-1)-\hat{v})}\\
&=\frac{\eta^{3l}\hat{v}^{2}-\eta^{l}(\eta^{l}-1)(\eta^{l+1}+\eta^{l}-1)\hat{v}+(\eta^{l}-1)^{2}(\eta^{l+1}-1)}{(\eta^{l+1}-1)(1+\eta^{l}\hat{v}) \eta^{l-1}(\eta(\eta^{l}-1)-\hat{v})}.
\end{aligned}
\end{equation*}
It follows from $\eqref{51}$ that $\eta(\eta^{l}-1)>\hat{v}$. What is more, the roots of polynomial
\begin{equation*}
\eta^{3l}t^{2}-\eta^{l}(\eta^{l}-1)(\eta^{l+1}+\eta^{l}-1)t+(\eta^{l}-1)^{2}(\eta^{l+1}-1)
\end{equation*}
are
\begin{equation*}
t_{1}=\frac{\eta^{l}-1}{\eta^{l}}\ {\rm and}\ t_{2}=\eta-\frac{\eta^{l}+\eta^{l +1}-1}{\eta^{2l}}.
\end{equation*}
Since $t_{1}<t_{2}\leq\hat{v}$,
\begin{equation*}
\frac{\eta^{l+1}-1-\eta^{l}\hat{v}}{(\eta^{l+1}-1)(1+\eta^{l}\hat{v})}\geq\frac{\eta^{l}-1-\eta^{l-1}\hat{v}}{\eta^{l-1}(\eta(\eta^{l}-1)-\hat{v})}.
\end{equation*}
Therefore,
\begin{equation}\label{54}
\dim_{\rm H}(\hat{\mathcal{V}}_{b,A}(\hat{v}))\leq\frac{\eta^{l+1}-1-\eta^{l}\hat{v}}{(\eta^{l+1}-1)(1+\eta^{l}\hat{v})}.
\end{equation}
Combining ($\ref{52}$), ($\ref{53}$) and ($\ref{54}$), we obtain that
\begin{equation*}
\dim_{\rm H}(\hat{\mathcal{V}}_{b,A}^{\ast}(\hat{v}))=\dim_{\rm H}(\hat{\mathcal{V}}_{b,A}(\hat{v}))=\frac{\eta^{\tilde{l}+1}-1-\eta^{\tilde{l}}\hat{v}}{(\eta^{\tilde{l}+1}-1)(1+\eta^{\tilde{l}}\hat{v})}.
\end{equation*}
\textrm{}
\end{proof}

\section{Some examples}\label{70}
\textbf{Example 7.1} Let $A=(2^{n})_{n=1}^{\infty}$, then $\eta=2$ and thus $l_{0}=2$. For any $\hat{v}\in(\frac{6}{5},\frac{3}{2})=\left(2-\frac{2\times2}{2^{2}+1},2-\frac{2}{2^{2}}\right)$, we have $l_{1}=2$. If follows from Theorem $\ref{12}$ that
\begin{equation*}
\begin{aligned}
\dim_{\rm H}(\hat{\mathcal{V}}_{b,A}(\hat{v}))&\leq\max\left\{\frac{2^{2+1}-1-2^{2}\hat{v}}{(2^{2+1}-1)(1+2^{2}\hat{v})},\frac{2^{2}-1-2^{2-1}\hat{v}}{2^{2-1}(2(2^{2}-1)- \hat{v})}\right\}\\
&=\max\left\{\frac{7-4\hat{v}}{7(1+4\hat{v})},\frac{3-2\hat{v}}{2(6-\hat{v})}\right\}.
\end{aligned}
\end{equation*}
An easy calculation shows that
\begin{equation*}
\tilde{l}=\begin{cases} 1, &{\rm if}\ \hat{v}\in(\frac{6}{5},\frac{5}{4}),\\ 2, &{\rm if}\ \hat{v}\in[\frac{5}{4},\frac{3}{2}). \end{cases}
\end{equation*}
By Theorem $\ref{13}$, we have
\begin{equation*}
\dim_{\rm H}(\hat{\mathcal{V}}_{b,A}^{\ast}(\hat{v}))\geq\begin{cases} \frac{2^{1+1}-1-2^{1}\hat{v}}{(2^{1+1}-1)(1+2^{1}\hat{v})}=\frac{3-2\hat{v}}{3(1+2\hat{v})}, &{\rm if}\ \hat{v}\in(\frac{6}{5},\frac{5}{4}),\\ \frac{2^{2+1}-1-2^{2}\hat{v}}{(2^{2+1}-1)(1+2^{2}\hat{v})}=\frac{7-4\hat{v}}{7(1+4\hat{v})}, &{\rm if}\  \hat{v}\in[\frac{5}{4},\frac{3}{2}).\end{cases}
\end{equation*}
In view of Corollary $\ref{16}$, we obtain that
\begin{equation*}
\dim_{\rm H}(\hat{\mathcal{V}}_{b,A}^{\ast}(\hat{v}))=\dim_{\rm H}(\hat{\mathcal{V}}_{b,A}(\hat{v}))=\frac{7-4\hat{v}}{7(1+4\hat{v})},\ \forall\ \hat{v}\in\left[\frac{21}{16},\frac{3}{2}\right].
\end{equation*}
\textbf{Example 7.2} Let $A=(3^{n})_{n=1}^{\infty}$, then $\eta=3$ and thus $l_{0}=1$. For any $\hat{v}\in(\frac{3}{2},\frac{7}{3})=(3-\frac{2\times3}{3^{1}+1},3-\frac{2}{3^{1}})$, we have $l_{1}=1$. If follows from Theorem $\ref{12}$ that
\begin{equation*}
\begin{aligned}
\dim_{\rm H}(\hat{\mathcal{V}}_{b,A}(\hat{v}))&\leq\max\left\{\frac{3^{2}-1-3^{1}\hat{v}}{(3^{2}-1)(1+3^{1}\hat{v})},\frac{3^{1}-1-3^{1-1}\hat{v}}{3^{1-1}(3(3^{1}-1)-\hat{v})}\right\}\\
&=\max\left\{\frac{8-3\hat{v}}{8(1+3\hat{v})},\frac{2-\hat{v}}{6-\hat{v}}\right\}.
\end{aligned}
\end{equation*}
An easy calculation shows that $\tilde{l}=1$. By Theorem $\ref{13}$, we have
\begin{equation*}
\dim_{\rm H}(\hat{\mathcal{V}}_{b,A}^{\ast}(\hat{v}))\geq\frac{3^{2}-1-3^{1}\hat{v}}{(3^{2}-1)(1+3^{1}\hat{v})}=\frac{8-3\hat{v}}{8(1+3\hat{v})},\ \forall\ \hat{v}\in\left(\frac{3}{2},\frac{7}{3}\right).
\end{equation*}
In view of Corollary $\ref{16}$, we obtain that
\begin{equation*}
\dim_{\rm H}(\hat{\mathcal{V}}_{b,A}^{\ast}(\hat{v}))=\dim_{\rm H}(\hat{\mathcal{V}}_{b,A}(\hat{v}))=\frac{8-3\hat{v}}{8(1+3\hat{v})},\ \forall\ \hat{v}\in\left[\frac{16}{9},\frac{7}{3}\right].
\end{equation*}
\subsection*{Acknowledgements}
We are grateful to the anonymous referee for a careful reading and many helpful comments. This research  was supported by NSFC 12271176, Guangdong Natural Science Foundation 2023A1515010691 and Guangdong Basic and Applied Basic Research Foundation 2023A1515110272.

\end{document}